\declaretheoremstyle[bodyfont=\normalfont,spaceabove=\medskipamount,
    spacebelow=\medskipamount]{definition}
\theoremstyle{definition}
\newtheorem{theorem}{Theorem}[section]
\newtheorem{lemma}[theorem]{Lemma}
\newtheorem{corollary}[theorem]{Corollary}
\newtheorem{proposition}[theorem]{Proposition}
\newtheorem{definition}[theorem]{Definition}
\newtheorem{notation}[theorem]{Notation}
\newtheorem{remark}[theorem]{Remark}
\newtheorem{example}[theorem]{Example}
\newtheorem{convention}[theorem]{Convention}
\DeclareMathOperator\tr{tr} 
\title{\large \textbf{ON VASSILIEV INVARIANTS OF VIRTUAL KNOTS}}
\author{\normalsize WOUT MOLTMAKER AND LOUIS H. KAUFFMAN}
\date{}
\begin{document}

\maketitle

\begin{abstract}
We discuss Vassiliev invariants for virtual knots, expanding upon the theory of quantum virtual knot invariants developed in \cite{kauffman2015rotational}. In particular, following the theory of quantum invariants we work with `rotational' virtual knots. We define chord diagrams, weight systems, and give examples of Lie algebra weight systems of rotational virtual knots. We end with a discussion of extended quantum invariants, which capture information that standard quantum invariants of rotational virtuals cannot.
\end{abstract}



\section{Introduction}

Virtual knots and their quantum invariants were first described by the second author in \cite{kauffman1999virtual,Kauffman_introduction,kauffman2015rotational}. Here he also introduced `rotational' virtual knots, which are the diagrammatic analogue of framed knots in the virtual setting. This is in the sense that for rotational virtual knot diagrams, the moves in Figure \ref{fig:kink_deletion} are disallowed (unlike the case for virtual knots). Rotational virtual knots are therefore essentially equivalence classes of virtual knot diagrams under the equivalence generated by the usual moves, except those depicted in Figure \ref{fig:kink_deletion}.

\begin{figure}[ht]
    \centering
    \includegraphics[width=.6\linewidth]{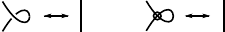}
    \caption{The first Reidemeister move (left) and its virtual analogue (right).}
    \label{fig:kink_deletion}
\end{figure}

Rotational virtual knots were introduced because the moves in Figure \ref{fig:kink_deletion} are precisely those moves that break invariance for the canonical extension of universal quantum invariants to virtual knots. A quantum invariant can be regarded as a partition function with weights that correspond to the crossings and to the maxima and minima in a Morse diagram. The presence of operators for these maxima and minima (the cups and the caps) coupled with virtual crossings means that applying the first virtual Reidemeister move (adding or removing a virtual curl) will change the invariant. This is analogous to the fact that quantum knot invariants are invariants of \textit{framed} knots, rather than of knots, but the change here is more global than the matter of framing. Rotational virtual knots and links are of interest in their own right and in \cite{kauffman2015rotational} a number of combinatorial and quantum invariants of them are articulated. There it is also shown that there are non-trivial rotational virtual links such that the canonical extension of quantum invariants to virtual rotational invariants does not distinguish them from the unlink. Investigating that gap by expanding on the theory of quantum invariants for rotational virtuals is one motivation for the present paper. Another is that quantum invariants associated to quantum groups $U_q(\mathfrak{g})$ are known to give rise to large classes of Vassiliev invariants. As these quantum invariants were defined for rotational virtual knots in \cite{kauffman2015rotational}, it is natural to consider their Vassiliev invariants in a sequel. This motivation also explains why we choose to work exclusively with rotational virtuals in this paper.

In Section \ref{sec:prelim} we briefly discuss the theory of rotational virtual knots, after which we discuss their Vassiliev invariants, chord diagrams, and weight systems at length in Section \ref{sec:vassiliev}. At the end of this section we give a construction of Lie algebra weight systems. Afterwards, in Section \ref{sec:quantum}, we recall the theory of quantum invariants of rotational virtual knots and discuss a generalization of these inspired by virtual chord diagrams. This generalization will turn out to distinguish the rotational virtual links that were shown in \cite{kauffman2015rotational} to be un-distinguishable by any regular quantum invariant.


\section{Rotational Virtual Knots}\label{sec:prelim}

In this section we briefly recall the basic definitions of rotational virtual knots, which were introduced in \cite{kauffman2015rotational}. After these preliminaries we define flat and singular rotational virtual knots.

\subsection{Virtual Knot Theory}

For our purposes it will suffice to think of virtual knots combinatorially, i.e.~as equivalence classes of diagrams rather than as geometric objects. From this perspective, a virtual knot diagram is nothing but a knot diagram with an extra type of crossing:

\begin{definition}
A \textbf{virtual knot diagram} is a $C^\infty$ immersion $S^1\hookrightarrow \mathbb{R}^2$ all of whose singularities are transversal self-intersections that are decorated either with over/under-crossing information (as for classical knot diagrams), or with a circle as in the right-hand side of Figure \ref{fig:kink_deletion}. Self-intersections of the latter kind are referred to as \textbf{virtual crossings}.
\end{definition}

As stated above, virtual knots are equivalence classes of virtual knot diagrams. We consider two equivalence relations on virtual knot diagrams; one yielding virtual knots and the other `rotational' virtual knots:

\begin{definition}
A \textbf{virtual knot} is an equivalence class of virtual knot diagrams, under the equivalence generated by ambient isotopies of $\mathbb{R}^2$, and the moves $R1$, $R2$, $R3$, $vR1$, $vR2$, $vR3$, and mixed $R3$; see Figure \ref{fig:moves}. Note that the `forbidden moves' $F_1$ and $F_2$, depicted in Figure \ref{fig:forbidden} are not included in the set of allowed moves.

\begin{figure}[ht]
    \centering
    \includegraphics[width=.9\linewidth]{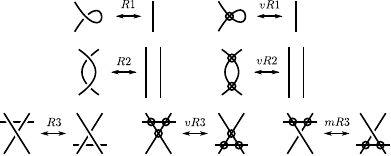}
    \caption{The Reidemeister moves and its virtual analogues.}
    \label{fig:moves}
\end{figure}

\begin{figure}[ht]
    \centering
    \includegraphics[width=.6\linewidth]{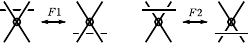}
    \caption{The forbidden moves; generally not allowed for virtual or rotational virtual knots.}
    \label{fig:forbidden}
\end{figure}

A \textbf{rotational virtual knot} is an equivalence class of virtual knot diagrams under the equivalence generated by the same moves as for virtual knots, except for the move $vR1$ and with $R1$ replaced by the \textbf{weakened first Reidemeister move} $R1'$; see Figure \ref{fig:weakened}.

\begin{figure}[ht]
    \centering
    \includegraphics[width=.3\linewidth]{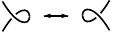}
    \caption{The weakened first Reidemeister move.}
    \label{fig:weakened}
\end{figure}

A virtual knot or rotational virtual knot $S^1\hookrightarrow \mathbb{R}^2$ is called \textbf{oriented} if it is endowed with an orientation inherited from an orientation on $S^1$.
\end{definition}

\begin{remark}
A consequence of $vR2$, $vR3$, and the mixed $R3$ move is the `detour move' on virtual diagrams; see Figure \ref{fig:detour}.

\begin{figure}[ht]
    \centering
    \includegraphics[width=.55\linewidth]{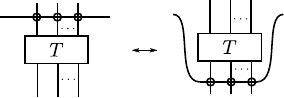}
    \caption{The detour move. Here $T$ is a virtual tangle and the dots indicate any number of parallel strands.}
    \label{fig:detour}
\end{figure}
\end{remark}

Rotational virtual knots are in some sense a virtual version of `framed' knots. We give a brief summary of these below:

\begin{definition}
A \textbf{framed knot} is an equivalence class of knot diagrams in $\mathbb{R}^2$ under the equivalence generated by isotopies of $\mathbb{R}^2$, $R1'$, $R2$, and $R3$. The \textbf{writhe} of a framed knot is its number of positive crossings minus its number of negative crossings.
\end{definition}

The writhe is clearly an invariant of framed knots, whereas it is not invariant under $R1$; see \cite{elhamdadi2020framed} for details. In fact the writhe exactly records the difference between knots and framed knots, in the following sense:

\begin{proposition}
The map
\begin{align*}
    \{\text{Framed knots}\} &\to \{\text{Knots}\} \times \mathbb{Z}\\
    K &\mapsto \left(K,\text{writhe}(K)\right)
\end{align*}
is a bijection.
\end{proposition}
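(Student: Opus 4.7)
The strategy is to check well-definedness, surjectivity, and injectivity of the map in turn.

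\emph{Well-definedness.} The map descends to framed-knot classes if both of its components do. For the knot component this is immediate: every move generating framed-knot equivalence also generates knot equivalence, so the knot class is unchanged. For the writhe component, note that $R2$ creates or removes a pair of oppositely-signed crossings, $R3$ permutes crossings without affecting their signs, and $R1'$ is by construction the move that preserves writhe (it simultaneously introduces a positive and a negative kink, or cancels such a pair). Hence writhe is a well-defined $\mathbb{Z}$-valued invariant of framed knots.

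\emph{Surjectivity.} Given $(K,n)$, fix any diagram $D$ of $K$ with writhe $w$. By applying the inverse of $R1$ move $|n-w|$ times, inserting kinks of sign $\sgn(n-w)$, one obtains a diagram $D'$ with writhe $n$ that still represents $K$. The framed-knot class of $D'$ is a preimage of $(K,n)$.

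\emph{Injectivity.} Suppose $[D_1]_f$ and $[D_2]_f$ both map to $(K,n)$. Then there is a finite sequence $\sigma$ of Reidemeister moves $R1, R2, R3$ (with planar isotopies) relating $D_1$ to $D_2$. Let $p$ and $q$ be the numbers of positive and negative $R1$ moves in $\sigma$; since $D_1$ and $D_2$ have the same writhe, necessarily $p=q$. The goal is to rewrite $\sigma$ as a sequence of $R1'$, $R2$, $R3$ moves. The idea is to pair each positive $R1$ in $\sigma$ with a negative one, and replace the pair as follows: just before the positive $R1$ would fire, prepend an $R1'$ introducing a pair of oppositely-signed kinks at the same location; the positive one takes the place of the kink that the $R1$ would have added (so that $R1$ step is no longer needed), while the negative kink is carried along the strand by $R2$, $R3$, and isotopy to the site of the paired negative $R1$, where an $R2$ cancels it against the kink the negative $R1$ would have added.

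\emph{Main obstacle.} The substantive step is the rewriting in the injectivity argument: one must verify that the auxiliary negative kink can always be shuttled through the evolving diagram to its paired site, and that the pairing of positive and negative $R1$'s remains consistent through the whole sequence. The cleanest way to handle this is to first normalize $\sigma$ so that all $R1$ moves are performed first, at a designated location on the diagram, using $R2$ and $R3$ to rearrange; then the replacement of paired $(+R1,-R1)$ moves by a single $R1'$ followed by an $R2$ cancellation is essentially mechanical, and reduces injectivity to the classical framed-vs-unframed comparison for knots.
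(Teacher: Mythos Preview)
The paper states this proposition without proof (it is quoted as a standard fact, with \cite{elhamdadi2020framed} cited nearby for background on writhe), so there is no paper argument to compare against. I comment on your proof directly.

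Well-definedness and surjectivity are correct and cleanly stated.

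In the injectivity sketch there is a slip. You write that the auxiliary negative kink is carried to the site of the paired negative $R1$, ``where an $R2$ cancels it against the kink the negative $R1$ would have added.'' But if that negative $R1$ \emph{adds} a negative kink, you now have two kinks of the \emph{same} sign adjacent, and neither $R2$ nor $R1'$ removes such a pair. What should happen is one of two things: either the carried negative kink simply \emph{is} the kink the negative $R1$ was going to create (so that $R1$ is omitted, with nothing to cancel), or the paired move is an $R1$ that \emph{removes} a positive kink, in which case the carried negative kink is brought adjacent and the opposite pair is cancelled via $R1'$ --- not $R2$, which acts on two distinct strands rather than on a single-strand curl.

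More broadly, your scheme as written only treats $R1$ moves that add kinks; an $R1$ can equally well remove one, and all four types (add/remove $\times$ positive/negative) must be handled in the bookkeeping. The lemma doing the real work --- which you invoke informally both for the ``shuttling'' and for your proposed normalisation of $\sigma$ --- is that a kink can be slid along the knot past any crossing using only $R2$ and $R3$. Once that lemma is stated and used explicitly, the remaining bookkeeping is routine and your outline goes through.
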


This shows that framed knots are nothing but knots with an integer attached, and justifies that the writhe of a framed knot is often referred to as its `(blackboard) framing'. Note that the framing of a framed knot can be adjusted by the addition of \textbf{curls}, which are the diagrammatic pieces depicted in Figure \ref{fig:weakened}. The rotational nature of rotational virtual knots similarly manifests in the existence of \textbf{virtual curls}: curls with a virtual crossings instead of a classical one. Unlike for virtual knots, in the rotational case these virtual curls cannot simply be removed. Do note that in certain cases they can cancel:

\begin{lemma}
Juxtaposed opposite virtual curls cancel via the `Whitney trick', depicted in Figure \ref{fig:cancellation}.
\end{lemma}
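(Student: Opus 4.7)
The plan is to realize the cancellation as an explicit sequence of moves allowed for rotational virtual knots. The key observation is that the ``opposite'' nature of the two juxtaposed virtual curls means their two virtual crossings, when regarded as transverse self-intersections of the underlying immersed curve in the plane, carry opposite signs. By the classical Whitney trick from differential topology, two such self-intersections can be removed simultaneously by a regular homotopy of the immersion supported in a small disk bounded by a Whitney arc.

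To translate this regular homotopy into diagrammatic moves, I would invoke the detour move described in the Remark preceding this lemma, which permits any portion of a virtual diagram containing only virtual crossings to be rerouted past any other part of the diagram. Applying the detour move to one of the two virtual curls, I would slide it along the strand toward the other curl so that the two virtual crossings are brought adjacent on a single strand. Since the curls are opposite, the two virtual crossings now appear in the standard vR2 configuration, and so can be cancelled, leaving a straight arc.

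The main subtlety is checking that the regular homotopy underlying the Whitney trick can be realized entirely by moves available in the rotational virtual category, namely planar isotopy, vR2, vR3, and mixed R3 (equivalently, the detour move together with vR2). One must in particular avoid any step that would silently require vR1 or R1, which are forbidden; this is automatic because the Whitney trick creates and destroys self-intersections only in opposite-sign pairs and never individually. Provided the curls are genuinely juxtaposed, so that no other portion of the diagram obstructs the small disk on which the homotopy is supported, the detour move applies freely and completes the cancellation.
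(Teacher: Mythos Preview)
Your proposal is correct and follows the paper's approach: the paper offers no written argument beyond Figure~\ref{fig:cancellation}, which simply depicts the Whitney trick as an explicit sequence of diagrams, and your verbalization captures the same idea. One small imprecision worth flagging: invoking the detour move to ``slide'' one curl toward the other is both unnecessary (the curls are already juxtaposed) and not quite what the detour move does; the actual trick is a planar isotopy of the short arc joining the two virtual crossings that brings them into $vR2$ position, followed by a single application of $vR2$---no $vR3$ or detour is required in the local picture. Your key observation, that the cancellation never invokes $vR1$ or $R1$ because self-intersections are only created or destroyed in opposite-sign pairs, is exactly the point of the lemma.
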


\begin{figure}[ht]
    \centering
    \includegraphics[width=.5\linewidth]{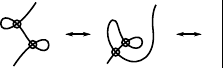}
    \caption{Cancellation of positive curls against negative ones.}
    \label{fig:cancellation}
\end{figure}

Our reason for introducing rotational virtual knots is that we will be interested in \textit{their} Vassiliev invariants, rather than those of virtual knots. This is because the large class of Vassiliev invariants obtained from quantum invariants consists of rotational virtual knot invariants. This is the analogy in the virtual setting to the study of Vassiliev invariants of framed knots.

\begin{convention}
In the rest of this paper, we shall restrict our attention to \textbf{rotational} virtual knots.
\end{convention}

\begin{remark}
The theory of (rotational) virtual knots is related to that of \textit{knotoids}, which were introduced in \cite{turaev2012knotoids}. The relation is as follows: given a knotoid, one can form its \textit{virtual closure} by adding an arc between its end-points that crosses virtually whenever it meets the rest of the knotoid diagram. The result is a virtual knot that is an invariant of the knotoid. Thus invariants of virtual knots yield invariants of knotoids. The virtual closure is well-defined by virtue of the detour move, but it is not well-defined as a rotational virtual knot: one can alter a knotoid diagram by planar isotopy to add virtual curls at will in the virtual closure. See Figure \ref{fig:closure}.

\begin{figure}[ht]
    \centering
    \includegraphics[width=.65\linewidth]{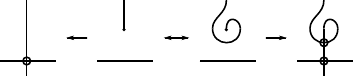}
    \caption{Altering a knotoid to add a virtual curl to its virtual closure.}
    \label{fig:closure}
\end{figure}

This problem is resolved if we work with \textit{biframed} knotoids, which were discussed in \cite{moltmaker2021framed,moltmaker2022new}, and in \cite{gugumcu2021quantum} as \textit{Morse} knotoids. In the case of biframed knotoids changes of coframing correspond to changes in the number of virtual curls under the virtual closure. 
\end{remark}

\subsection{Singular and Flat Rotational Virtuals}\label{subsec:singular}

As noted earlier, virtual knot diagrams can be though of as knot diagrams with an additional decoration choice for self-intersections, other than over/under-crossing information. There are two other decorations that we will consider here, leaving us with a total of 4 possible decorations\footnote{Incidentally we will only encounter diagrams with at most 3 types of crossing, though one could study all 4 types simultaneously, if desired.}.

The first decoration we consider is the undecorated crossing:
\begin{definition}
A \textbf{flat crossing} in a knot diagram is an un-decorated self-intersection of the embedded curve $S^1\to \mathbb{R}^2$ defining the diagram. \textbf{Flat virtual knots} are equivalence classes of knot diagrams with \textit{only} virtual and flat crossings, under the equivalence generated by isotopy of $\mathbb{R}^2$ and the moves depicted in Figure \ref{fig:moves}, but with all classical crossings replaced by flat ones. Similarly \textbf{flat rotational virtual knots} are diagrams with only flat and virtual crossings, up to the equivalence generated by the flat analogue of the moves defining equivalence of rotational virtual knots, \textit{as well as} the flat analogue of $R1$.
\end{definition}

Other than flat and virtual crossings there is another type of crossing that is `neither positive nor negative'. For our purposes it is best understood via the \textit{vector space} of rotational virtual knots:

\begin{definition}
We let $\mathcal{K}$ denote the vector space over $\mathbb{C}$ spanned by a basis consisting of the equivalence classes of rotational virtual knots, and refer to it as the vector space of rotational virtual knots.
\end{definition}

\begin{definition}
A \textbf{singular crossing} is a crossing in a knot diagram decorated with a black dot. A \textbf{singular rotational virtual knot} is a knot diagram with singular crossings, and is understood to be an element of $\mathcal{K}$ via the Vassiliev resolution, depicted in Figure \ref{fig:resolution}: we interpret a singular crossing as the (singular) rotational virtual knot obtained by replacing it with a positive crossing, minus that obtained from replacement with a negative crossing. Repeating this for all singular crossings identifies any singular diagram with an element of $\mathcal{K}$. The \textbf{degree} of a singular rotational virtual knot is its number of singular crossings, and the vector space over $\mathbb{C}$ spanned by singular rotational virtual knots of degree $m$ is denoted $\mathcal{K}_m$.
\end{definition}

\begin{figure}[ht]
    \centering
    \includegraphics[width=.35\linewidth]{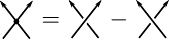}
    \caption{The Vassiliev resolution.}
    \label{fig:resolution}
\end{figure}

An example of a singular rotational virtual knot is depicted in Figure \ref{fig:singular_example}.

\begin{figure}[ht]
    \centering
    \includegraphics[width=.35\linewidth]{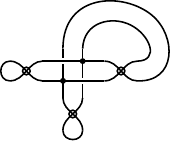}
    \caption{A rotational virtual knot with singular crossings.}
    \label{fig:singular_example}
\end{figure}

The following lemmas are immediate:

\begin{lemma}
There is a filtration
\[
    \mathcal{K} \geq \mathcal{K}_1 \geq \mathcal{K}_2 \geq \dots
\]
meaning $\mathcal{K}_m$ consists consists of singular rotational virtual knots with $m$ or more singular crossings.
\end{lemma}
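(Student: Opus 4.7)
The plan is to establish the chain $\mathcal{K} \supseteq \mathcal{K}_1 \supseteq \mathcal{K}_2 \supseteq \dots$ by showing each single-step inclusion $\mathcal{K}_{m+1}\subseteq \mathcal{K}_m$, and then to derive the alternative description (``$m$ or more singular crossings'') as an immediate corollary by iterating that inclusion.

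First I would recall that by definition $\mathcal{K}_{m+1}$ is the $\mathbb{C}$-span in $\mathcal{K}$ of singular rotational virtual knots of degree exactly $m+1$. Pick such a generator $K$ and select any one of its $m+1$ singular crossings, call it $c$. By the Vassiliev resolution (Figure~\ref{fig:resolution}), the equivalence class of $K$ in $\mathcal{K}$ equals $K^+_c - K^-_c$, where $K^\pm_c$ denotes the singular diagram obtained from $K$ by replacing $c$ with a positive, resp. negative, classical crossing. Since the remaining $m$ singular crossings are untouched, each of $K^+_c$ and $K^-_c$ has degree exactly $m$ and thus lies in $\mathcal{K}_m$. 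Hence $K = K^+_c - K^-_c \in \mathcal{K}_m$, and since $K$ was an arbitrary generator of $\mathcal{K}_{m+1}$, we conclude $\mathcal{K}_{m+1}\subseteq \mathcal{K}_m$. Of course $\mathcal{K}_m\subseteq \mathcal{K}$ holds trivially from the definition, so the descending chain is in place.

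For the reinterpretation, let $D$ be any singular rotational virtual knot with $n\geq m$ singular crossings. Then $D\in \mathcal{K}_n$ by definition, and iterating the inclusion established above $n-m$ times gives $\mathcal{K}_n\subseteq \mathcal{K}_{n-1}\subseteq \dots \subseteq \mathcal{K}_m$, so $D\in \mathcal{K}_m$. Conversely, the generators of $\mathcal{K}_m$ are degree $m$ diagrams, which in particular have at least $m$ singular crossings. Therefore $\mathcal{K}_m$ is precisely the span of singular rotational virtual knots with $\geq m$ singular crossings, matching the phrasing in the lemma.

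There is no real obstacle here: the only content is the observation that the Vassiliev resolution trades one singular crossing for a signed pair of diagrams with one fewer, and this immediately produces the filtration. The slight subtlety worth being careful about is the compatibility between the two descriptions of $\mathcal{K}_m$ (``exactly $m$'' versus ``at least $m$''), which the two-sided argument above handles.
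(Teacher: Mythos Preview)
Your argument is correct and is exactly the standard one: resolving a single singular crossing via the Vassiliev relation expresses a degree $m+1$ generator as a difference of two degree $m$ generators, giving $\mathcal{K}_{m+1}\subseteq\mathcal{K}_m$. The paper itself gives no proof, simply declaring the lemma ``immediate,'' so your write-up is just a spelled-out version of what the authors left implicit.
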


\begin{lemma}
We have that
\[
    \mathcal{K}_m \cong \frac{\mathcal{K}_m}{\mathcal{K}_{m+1}} \oplus \frac{\mathcal{K}_{m-1}}{\mathcal{K}_m} \oplus \dots \oplus \frac{\mathcal{K}_1}{\mathcal{K}_2} \oplus \frac{\mathcal{K}}{\mathcal{K}_1}
\]
for all $m\in\mathbb{N}_{>0}$.
\end{lemma}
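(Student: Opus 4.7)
The plan is to prove the decomposition by iteratively splitting the short exact sequences furnished by the filtration. Because $\mathcal{K}$ is a vector space over the field $\mathbb{C}$, every subspace admits a linear complement, and every short exact sequence of $\mathbb{C}$-vector spaces splits. I would exploit this one level of the filtration at a time and then telescope the pieces into a single internal direct sum decomposition.

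Concretely, for each $k \in \{0, 1, \ldots, m\}$ (with the convention $\mathcal{K}_0 \defeq \mathcal{K}$), I would choose a linear section $\sigma_k : \mathcal{K}_k / \mathcal{K}_{k+1} \to \mathcal{K}_k$ of the canonical projection, and set $S_k \defeq \im(\sigma_k)$. By construction $S_k$ is a complement to $\mathcal{K}_{k+1}$ inside $\mathcal{K}_k$, giving the internal direct sum decomposition
\[
    \mathcal{K}_k = S_k \oplus \mathcal{K}_{k+1}, \qquad S_k \cong \mathcal{K}_k/\mathcal{K}_{k+1}.
\]
These are the building blocks that the statement's right-hand side assembles, one graded piece per level of the filtration.

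I would then telescope the splittings. Beginning with the splitting at level $m$, namely $\mathcal{K}_m = S_m \oplus \mathcal{K}_{m+1}$, successive substitutions using the splittings at the levels $k = m-1, m-2, \ldots, 0$ bring in one new complement $S_k \cong \mathcal{K}_k/\mathcal{K}_{k+1}$ at each stage. After $m+1$ such substitutions one has accumulated all of the graded summands $\mathcal{K}_k/\mathcal{K}_{k+1}$ for $k = 0, 1, \ldots, m$ in precisely the order listed in the claimed isomorphism, identifying them with the internal complements inside the filtration.

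The only technical point requiring care is verifying that the iterated complements fit together as an \emph{internal} direct sum. This is straightforward by induction on the number of telescoped layers: at each stage the newly adjoined complement $S_k$ is disjoint from $\mathcal{K}_{k+1}$ (and hence from all previously gathered pieces, which lie inside $\mathcal{K}_{k+1}$) by the very choice of $S_k$. Beyond this piece of bookkeeping there is no real obstacle; the lemma is essentially the splitting principle for a finite filtration of vector spaces over a field, applied to the Vassiliev filtration on $\mathcal{K}$, and does not require any topological or diagrammatic input beyond the existence of the filtration itself established in the preceding lemma.
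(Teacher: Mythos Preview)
Your splitting-and-telescoping strategy is the correct one in spirit, and indeed the paper offers no argument beyond calling the lemma ``immediate''. However, the telescoping step as you describe it does not prove the identity as printed. The splitting at level $k$ reads $\mathcal{K}_k = S_k \oplus \mathcal{K}_{k+1}$; it lets you substitute for $\mathcal{K}_k$ wherever it appears, but it gives you no way to insert $S_{m-1}, S_{m-2}, \ldots, S_0$ into an expression whose left-hand side is $\mathcal{K}_m$. Starting from $\mathcal{K}_m = S_m \oplus \mathcal{K}_{m+1}$ and using the splitting at level $m-1$ yields $\mathcal{K}_{m-1} = S_{m-1}\oplus S_m \oplus \mathcal{K}_{m+1}$: the left-hand side has changed. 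Iterating down to $k=0$ produces
\[
    \mathcal{K} \;=\; S_0 \oplus S_1 \oplus \cdots \oplus S_m \oplus \mathcal{K}_{m+1},
\]
hence $\mathcal{K}/\mathcal{K}_{m+1}\cong \bigoplus_{k=0}^m \mathcal{K}_k/\mathcal{K}_{k+1}$, not $\mathcal{K}_m\cong \bigoplus_{k=0}^m \mathcal{K}_k/\mathcal{K}_{k+1}$. Your own bookkeeping sentence betrays this: you note that $S_k$ is disjoint from $\mathcal{K}_{k+1}$, but for $k<m$ the complement $S_k$ does not lie inside $\mathcal{K}_m$ at all, so it cannot contribute to an internal direct sum decomposition of $\mathcal{K}_m$.

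The upshot is that the displayed formula in the paper is almost certainly a typo (compare the correctly stated companion lemma for $\mathcal{V}_m$, where the filtration is increasing and $\mathcal{V}_m$ legitimately appears on the left). Your argument actually proves the corrected version with $\mathcal{K}/\mathcal{K}_{m+1}$ on the left; you should say so explicitly rather than claim that the telescoping yields $\mathcal{K}_m$.
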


\begin{remark}
When working in $\mathcal{K}_m / \mathcal{K}_{m+1}$, for example with the equivalence class $[K]$ of a degree $m$ singular knot $K$, note that we are allowed to `switch' the classical crossings of $K$ at will. Namely, such a switch amounts to adding or subtracting a degree $m+1$ singular knot to $K$, which does not affect the class $[K]$. So in some sense, when working in $\mathcal{K}_m / \mathcal{K}_{m+1}$ only the information of singular crossings matters, and that of classical crossings can be neglected. We will see this theme recur in Section \ref{sec:vassiliev}.
\end{remark}


\section{Vassiliev Invariants}\label{sec:vassiliev}

In this section we define Vassiliev invariants in terms of the singular rotational virtual knots defined in Section \ref{subsec:singular}. From here we define chord diagrams and weight systems in analogy with the theory of Vassiliev invariants for classical knots. Afterwards we briefly discuss a class of examples of weight systems, namely those coming from representations of semisimple Lie algebras.

\subsection{Vassiliev Invariants}

There is a clear bijection between $\mathbb{C}$-valued rotational virtual knot invariants and linear maps $\mathcal{K}\to \mathbb{C}$. This correspondence is given by sending an invariant $\varphi$ to the linear map taking the values of $\varphi$ on the standard basis elements of $\mathcal{K}$ (which are rotational virtual knots). We define `Vassiliev' invariants to be elements of certain subspaces of $\text{Hom}(\mathcal{K},\mathbb{C})$:

\begin{definition}
A \textbf{degree $m$ Vassiliev invariant} is a $\mathbb{C}$-valued rotational virtual knot invariant $\varphi$ whose associated element of $\text{Hom}(\mathcal{K},\mathbb{C})$ vanishes on $\mathcal{K}_{m+1}$. The vector space of degree $m$ Vassiliev invariants is denoted $\mathcal{V}_m$, and the vector space of all Vassiliev invariants is denoted $\mathcal{V}$.
\end{definition}

The following lemmas are again immediate; see \cite{jackson2019introduction} for details:

\begin{lemma}
We have a filtration
\[
    \mathcal{V}_0 \leq \mathcal{V}_1 \leq \mathcal{V}_2 \leq \dots
\]
as well as isomorphisms of vector spaces
\[
    \mathcal{V}_m \cong \frac{\mathcal{V}_m}{\mathcal{V}_{m-1}} \oplus \frac{\mathcal{V}_{m-1}}{\mathcal{V}_{m-2}} \oplus \dots \oplus \frac{\mathcal{V}_1}{\mathcal{V}_0} \oplus \mathcal{V}_0
\]
for all $m\in\mathbb{N}_{>0}$.
\end{lemma}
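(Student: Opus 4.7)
The plan is to prove the filtration and the splitting separately, both from first principles using the definitions and standard facts about vector spaces over a field.

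For the filtration $\mathcal{V}_0 \leq \mathcal{V}_1 \leq \mathcal{V}_2 \leq \dots$, I would argue directly from the definitions. By hypothesis (from the previous lemma on the filtration of $\mathcal{K}$), we have $\mathcal{K}_{m+2} \leq \mathcal{K}_{m+1}$. So if $\varphi \in \mathcal{V}_m$, meaning the associated linear functional $\mathcal{K} \to \mathbb{C}$ vanishes on all of $\mathcal{K}_{m+1}$, then in particular it vanishes on the subspace $\mathcal{K}_{m+2}$, and hence $\varphi \in \mathcal{V}_{m+1}$. This gives the chain of inclusions.

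For the direct sum decomposition, I would use that every short exact sequence of $\mathbb{C}$-vector spaces splits, as any subspace admits a complement. Applied to
\[
    0 \to \mathcal{V}_{m-1} \hookrightarrow \mathcal{V}_m \twoheadrightarrow \mathcal{V}_m/\mathcal{V}_{m-1} \to 0,
\]
this yields $\mathcal{V}_m \cong \mathcal{V}_{m-1} \oplus (\mathcal{V}_m/\mathcal{V}_{m-1})$. I would then induct on $m$: the base case $m=1$ gives $\mathcal{V}_1 \cong \mathcal{V}_0 \oplus (\mathcal{V}_1/\mathcal{V}_0)$ directly, and for the inductive step, one substitutes the decomposition of $\mathcal{V}_{m-1}$ furnished by the inductive hypothesis into the splitting above to obtain the claimed decomposition of $\mathcal{V}_m$.

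There is essentially no obstacle here: this is a routine structural lemma about filtered vector spaces over a field, analogous to the corresponding lemma already stated for the filtration of $\mathcal{K}$. The only thing worth flagging is that the isomorphism is not canonical, but depends on a choice of complement at each stage; this is harmless since only the existence of an abstract vector space isomorphism is claimed.
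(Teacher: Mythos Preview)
Your proof is correct and is exactly the standard argument the paper has in mind: the paper does not spell out a proof at all, merely stating that the lemma is ``immediate'' and referring to \cite{jackson2019introduction} for details. Your argument---deducing the filtration from $\mathcal{K}_{m+2}\leq\mathcal{K}_{m+1}$ and obtaining the splitting by iterated use of the fact that short exact sequences of vector spaces split---is precisely what underlies that reference.
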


\begin{lemma}
We have
\[
    \frac{\mathcal{V}_m}{\mathcal{V}_{m-1}} \cong \left( \frac{\mathcal{K}_m}{\mathcal{K}_{m+1}} \right)^*
\]
for all $m\in\mathbb{N}_{>0}$.
\end{lemma}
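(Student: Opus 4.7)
The plan is to realise the isomorphism via the obvious restriction-and-descent map. Namely, I would define $\Phi\colon \mathcal{V}_m \to (\mathcal{K}_m/\mathcal{K}_{m+1})^*$ by sending a degree $m$ invariant $\varphi\colon \mathcal{K}\to \mathbb{C}$ to the linear functional on $\mathcal{K}_m/\mathcal{K}_{m+1}$ that it induces; this is well-defined since $\varphi$ vanishes on $\mathcal{K}_{m+1}$ by the very definition of $\mathcal{V}_m$. Linearity in $\varphi$ is immediate, so $\Phi$ is a linear map.

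Next I would compute the kernel. An element $\varphi\in\mathcal{V}_m$ lies in $\ker\Phi$ precisely when $\varphi$ vanishes on all of $\mathcal{K}_m$, which by definition is exactly the statement $\varphi\in\mathcal{V}_{m-1}$. Hence $\Phi$ descends to an injection $\overline{\Phi}\colon \mathcal{V}_m/\mathcal{V}_{m-1} \hookrightarrow (\mathcal{K}_m/\mathcal{K}_{m+1})^*$.

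It remains to verify surjectivity of $\overline{\Phi}$. Given any $f\in(\mathcal{K}_m/\mathcal{K}_{m+1})^*$, I would first pull it back through the quotient map to a linear functional $\tilde{f}\colon \mathcal{K}_m \to \mathbb{C}$ vanishing on $\mathcal{K}_{m+1}$. Since $\mathcal{K}_m$ is a subspace of the $\mathbb{C}$-vector space $\mathcal{K}$, choose a linear complement $W$ with $\mathcal{K} = \mathcal{K}_m \oplus W$, and define $\varphi\colon \mathcal{K}\to\mathbb{C}$ by $\varphi|_{\mathcal{K}_m} = \tilde{f}$ and $\varphi|_W = 0$. Then $\varphi$ vanishes on $\mathcal{K}_{m+1}\subseteq \mathcal{K}_m$, so $\varphi\in\mathcal{V}_m$, and by construction $\Phi(\varphi) = f$. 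This shows $\overline{\Phi}$ is surjective, hence an isomorphism.

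I do not anticipate any genuine obstacle: the whole statement is a formal piece of linear algebra about filtered vector spaces, with the virtual-knot content entirely hidden in the definitions of $\mathcal{K}_m$ and $\mathcal{V}_m$. The only mild subtlety is that $\mathcal{K}$ is infinite-dimensional, so the existence of the complement $W$ invokes the axiom of choice, which is standard and poses no issue at this level of generality.
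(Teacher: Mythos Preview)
Your argument is correct and is precisely the standard linear-algebra verification the paper has in mind; the paper itself gives no proof, declaring the lemma ``immediate'' and deferring to \cite{jackson2019introduction}. Your restriction-and-extension map, kernel computation, and use of a complement to establish surjectivity constitute exactly that standard argument.
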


\begin{example}\label{ex:jones}
As an example of Vassiliev invariants of (rotational) virtual knots, we consider the Jones polynomial $J$ of virtual knots. Note that this is also an invariant of rotational virtual knots, by pre-composing with the canonical map sending rotational virtual knots to virtual knots. The Jones polynomial is known to satisfy a skein relation, depicted in Figure \ref{fig:jones_skein}.

\begin{figure}[ht]
    \centering
    \includegraphics[width=.65\linewidth]{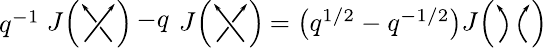}
    \caption{The skein relation satisfied by the Jones polynomial.}
    \label{fig:jones_skein}
\end{figure}

Using this skein relation we can extract Vassiliev invariants from the Jones polynomial as follows: if we substitute $q=e^{h/2}$ into the Jones polynomial and expand the result as a power series in $h$ via $e^{h/2}=1+\frac{h}{2}+\frac{h^2}{8}+\frac{h^3}{48}+\dots$ then one can check that the value attributed to a singular crossing is divisible by $h$, namely by reading off the terms lowest order in $h$ from Figure \ref{fig:jones_skein}. We deduce that the Jones polynomial of a knot with $m$ singular crossings is divisible by $h^m$. As a result, we can define $c_i:\mathcal{K}\to \mathbb{C}$ to be the degree $i$ coefficient of $J(K)\vert_{q=e^{h/2}}$ after expansion as a power series, and find that this is a Vassiliev invariants of order $i$.
\end{example}

\begin{example}\label{ex:quantum}
The previous example applies equally well to the case of more general quantum invariants, where one is given a quantum group $U_q(\mathfrak{g})$ and a finite-dimensional irreducible representation $V$. This quantum group $U_q(\mathfrak{g})$ has the structure of a ribbon Hopf algebra, which we will come back to in Section \ref{sec:quantum}. Here it suffices to say that the invariants associated to $\left(U_q(\mathfrak{g}),V\right)$ assign operators $R,R^{-1}\in \text{Hom}(V\otimes V)$ to positive and negative crossings respectively. It is known that these operators satisfy $R\equiv R^{-1}\text{ mod }h$ \cite{chmutov2012introduction} after the substitution $q=e^{h/2}=1+\frac{h}{2}+\frac{h^2}{8}+\dots$. Therefore the same reasoning from Example \ref{ex:jones} applies to all these examples, retrieving a power series of Vassiliev invariants from each choice of $(\mathfrak{g},V)$.
\end{example}

\subsection{Chord Diagrams}

As in the case of classical knots, Vassiliev invariants are susceptible to study by combinatorial methods. This is facilitated by the introduction of \textit{chord diagrams}. 

We begin with a brief recap of chord diagrams for (framed) knots to guide our intuition for the virtual case. One way to think about the association of chord diagrams to classical (framed) singular knots is as follows: First, at each singular crossing, two portions of the knot cross. Place a dotted `chord' of line segment between these portions of knot (at some points in open neighbourhoods of the singular crossing between these arcs). Next, replace all classical and singular crossings by flat crossings. The result is a flat knot diagram with several points on it connected by chords. Since all flat knots (without virtual crossings) are trivial, we can unknot this diagram. We do so, and keep track of the positions of all the chords along the way. Here crossings between chord and knot diagram are also taken to be flat. The result is a circle decorated with chords; a chord diagram. For an example of this construction, see Figure \ref{fig:knot_chord}.

\begin{figure}[ht]
    \centering
    \includegraphics[width=.6\linewidth]{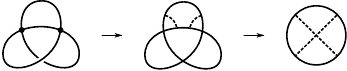}
    \caption{Extracting a chord diagram from a singular knot.}
    \label{fig:knot_chord}
\end{figure}

We consider the vector space $\mathcal{C}$ of chord diagrams of knots taken up to planar isotopy and quotient by the \textbf{4-term relation} $4T$, depicted in Figure \ref{fig:4T}, to obtain the vector space $\mathcal{A}$ of chord diagrams. Note that the chord diagrams in Figure \ref{fig:4T} may have more chords in the portions of chord diagram that aren't depicted, but that we assume these to be identical for all 4 terms.

\begin{figure}[ht]
    \centering
    \includegraphics[width=.75\linewidth]{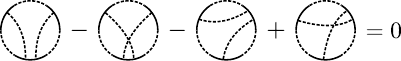}
    \caption{The 4-term relation $4T$.}
    \label{fig:4T}
\end{figure}

For rotational virtual knots, we mimic this interpretation of chord diagrams: given a singular rotational virtual knot, we place chords connecting the arcs of its singular crossings as for knots, and subsequently delete the decorations on classical and singular crossings. The result is a flat rotational virtual knot, decorated with chords. (Recall that we defined flat virtual knots in Section \ref{subsec:singular}.) For an example, see Figure \ref{fig:RV_chord_ex}.

\begin{figure}[ht]
    \centering
    \includegraphics[width=.75\linewidth]{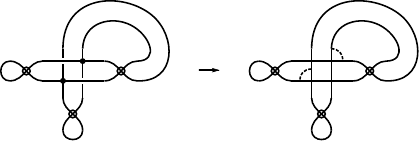}
    \caption{An example of a rotational virtual chord diagram assigned to a singular rotational virtual knot diagram via $\phi_2$.}
    \label{fig:RV_chord_ex}
\end{figure}

\begin{definition}
We define \textbf{chord diagrams of rotational virtual knots} to be flat rotational virtual knots decorated with chords. We call the underlying flat rotational virtual knot the \textbf{skeleton} of the chord diagram. We consider these `rotational virtual chord diagrams' up to the equivalence generated by the same moves that generate the equivalence of flat rotational virtual knots, keeping track of any chord attachment points under such moves.

Note that when tracking chord attachment points under ambient isotopy, we allow these attachment points to slide past flat crossings, but not past virtual crossings. See figure \ref{fig:chord_sliding}.

\begin{figure}[ht]
    \centering
    \includegraphics[width=.7\linewidth]{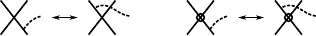}
    \caption{An allowed chord slide (left) and a forbidden virtual chord slide (right).}
    \label{fig:chord_sliding}
\end{figure}

\begin{example}\label{ex:virtual_slide}
To justify that virtual chord slides should be disallowed, consider the rotational virtual chord diagrams depicted at the top of Figure \ref{fig:chord_slide_ex}.

\begin{figure}[ht]
    \centering
    \includegraphics[width=.9\linewidth]{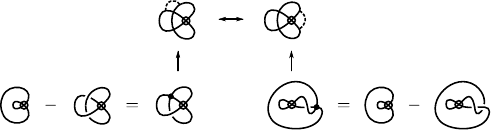}
    \caption{Chord diagrams related by a virtual chord slide but arising from different singular rotational virtual knots.}
    \label{fig:chord_slide_ex}
\end{figure}

It is immediate that the two chord diagrams at the top of Figure \ref{fig:chord_slide_ex} are related by a virtual chord slide. On the row below these chord diagrams are two singular rotational virtual knots that give rise to these chord diagrams. As depicted, one can compute that these singular diagrams correspond to different elements of $\mathcal{K}$. Moreover by virtue of the low crossing number it is easy to see that they are different elements of $\mathcal{K}/\mathcal{K}_2$. Indeed, one can compute they are distinguished by the polynomial invariant $\mathbf{p}_t(K)$ from \cite{Allison}, which is a degree one Vassiliev invariant of non-rotational virtual knots. Therefore we conclude that allowing virtual chord slides contradicts the property that rotational virtual chord diagrams are uniquely determined by their underlying singular diagram, up to equivalence and swapping of classical crossings. Since this property is of fundamental importance to the algebraic formalism of Vassiliev invariants (see Section \ref{subsec:weights}) we are forced to disallow virtual chord slides.
\end{example}

We use $\mathcal{C}^{RV}$ to denote the vector space of chord diagrams of rotational virtual knots.
\end{definition}

\begin{notation}
In this paper we will reserve the notation $\mathcal{C}$ and $\mathcal{A}$ for chord diagram vector spaces of classical knots, as we will make use of such diagrams again later on in the paper. For chord diagrams of rotational virtual knots we will use $\mathcal{C}^{RV}$ and $\mathcal{A}^{RV}$, even though we've chosen not to add such superscripts onto $\mathcal{K}$ and $\mathcal{V}$.
\end{notation}

\begin{definition}\label{def:forget}
Given a rotational virtual chord diagram, we can `forget virtual structure' by replacing all its virtual crossings by flat crossings. Up to equivalence of flat virtual knots, this flat knot diagram is trivial, and hence defines a chord diagram of knots. Extending this construction linearly yields a `forgetful' map of vector spaces $F_v:\mathcal{C}^{RV}\to \mathcal{C}$.
\end{definition}

Having defined rotational virtual chord diagrams, we return to the assignment of rotational virtual chord diagrams to rotational virtual knots described above:

\begin{definition}
Let $\mathcal{K}^\bullet_m$ denote the set of equivalence classes of degree $m$ singular rotational virtual knots. For each $m\in\mathbb{N}$ we define the set function $\phi_m: \mathcal{K}^\bullet_m\to \mathcal{C}^{RV}_m$ via the construction depicted in Figure \ref{fig:RV_chord_ex}: placing chords connecting the two arcs at each virtual crossing and subsequently turning all classical and singular crossings flat.
\end{definition}

\begin{remark}
If we are given an oriented rotational virtual knot $K$, then the above construction goes through as described, except that the chord diagram $\phi_m(K)$ will now have an oriented skeleton. We call such chord diagrams \textbf{oriented}. This orientation on the skeleton is inherited in the obvious way from $K$. Most of this paper goes through equivalently for both oriented and un-oriented chord diagrams, so we will not reserve distinguished notation for the oriented chord diagrams. If the distinction between oriented chord diagrams and un-oriented ones is important, we will make explicit with which we work.
\end{remark}

It is known in the case of classical knots that the chord diagram $\phi_m(K)$ of a singular framed knot $K$, considered as an element of $\mathcal{A}$ instead of $\mathcal{C}$, determines its flattened singular diagram (i.e.~its equivalence class in $\mathcal{K}_m/\mathcal{K}_{m+1}$) uniquely. Inspired to mimic this result for the virtual case, we consider a quotient of rotational virtual chord diagrams analogous to $\mathcal{A}$.

\begin{definition}
We define the 4-term relation $4T$ in the same way as for knots, except that now chords no longer lie between portions of unknotted circle, but between arcs of flat rotational virtual knot diagram. As such $4T$ is more faithfully depicted for rotational virtual chord diagrams as in Figure \ref{fig:4T_virtual}.

\begin{figure}[ht]
    \centering
    \includegraphics[width=.75\linewidth]{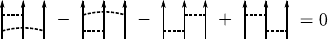}
    \caption{The $4T$ relation for rotational virtual chord diagrams.}
    \label{fig:4T_virtual}
\end{figure}

Other than the $4T$ relation, there is another relation we will want to quotient $\mathcal{C}^{RV}$ by: the \textbf{chord detour} relation $CD$, depicted in Figure \ref{fig:CD}. For some intuition: as arcs with only singular crossings have been allowed to detour in any kind of diagram so far, it is natural that they can also detour around chords.

\begin{figure}[ht]
    \centering
    \includegraphics[width=.4\linewidth]{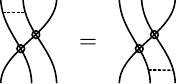}
    \caption{The chord detour relation $CD$.}
    \label{fig:CD}
\end{figure}
\end{definition}

\begin{definition}
We define the vector space $\mathcal{A}^{RV}$ of rotational virtual chord diagrams to be the quotient of $\mathcal{C}^{RV}$ by the subspace generated by the $4T$ and $CD$ relations.
\end{definition}

\begin{lemma}\label{lm:double_slide}
As noted in Example \ref{ex:virtual_slide} virtual chord slides cannot be applied in general. However by virtue of the $CD$ relation \textit{double} chord slides can be applied in $\mathcal{A}^{RV}$, see Figure \ref{fig:double_slide}.

\begin{figure}[ht]
    \centering
    \includegraphics[width=.25\linewidth]{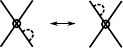}
    \caption{A double chord slide.}
    \label{fig:double_slide}
\end{figure}
\end{lemma}

\begin{proof}
The proof is depicted in Figure \ref{fig:double_slide_pf}.
\begin{figure}[ht]
    \centering
    \includegraphics[width=.8\linewidth]{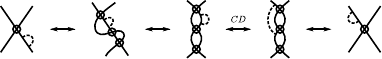}
    \caption{Proof of Lemma \ref{lm:double_slide}.}
    \label{fig:double_slide_pf}
\end{figure}
\end{proof}


To end this section we consider one more construction that will be of interest to us: the `chord-contracting' map $\psi_m: \mathcal{A}^{RV}_m\to \mathcal{K}_m/\mathcal{K}_{m+1}$ that produces from a chord diagram $C$ an equivalence class $[K]\in \mathcal{K}_m/\mathcal{K}_{m+1}$ such that $\phi_m(K)=C$.

\begin{definition}
Given a rotational virtual chord diagram $C$, we define $\psi_m(C)$ to be the element of $\mathcal{K}_m/\mathcal{K}_{m+1}$ represented by the diagram $K$ obtained from replacing all the flat crossings in $C$ by arbitrarily chosen classical crossings, and `contracting' all the chords of $C$ as in Figure \ref{fig:contraction}. Note in this figure that the attaching points of the chord may be separated by chords or portions of virtual knot diagram that are not depicted.

\begin{figure}[ht]
    \centering
    \includegraphics[width=.3\linewidth]{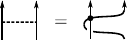}
    \caption{Contracting a chord.}
    \label{fig:contraction}
\end{figure}

Next note that when we contract a chord we create a classical crossing. We may also cross other parts of the diagram, not shown in Figure \ref{fig:contraction}, when contracting a chord, creating even more classical crossings. We choose whether these crossings are positive or negative arbitrarily. As we only consider $[K]\in \mathcal{K}_m/\mathcal{K}_{m+1}$, these choices do not influence the value of $\psi_m(C)$.
\end{definition}

Since the domain of $\psi_m$ is taken to be $\mathcal{A}^{RV}_m$ we must still verify that this construction satisfies the $4T$ and $CD$ relations:

\begin{lemma}
The maps $\psi_m$ are well-defined.
\end{lemma}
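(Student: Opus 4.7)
The strategy is to verify in turn that $\psi_m$ descends through each layer of the quotient defining $\mathcal{A}^{RV}_m$: independence of the arbitrary sign choices, invariance under the equivalence on $\mathcal{C}^{RV}$, and annihilation of the $4T$ and $CD$ relations.

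For the sign choices, I would invoke the remark at the end of Section \ref{subsec:singular}: if $K$ is a degree-$m$ singular diagram and $K'$ differs from it by a single classical crossing flip, then $K-K'$ equals the degree-$(m+1)$ singular diagram obtained by placing a singular dot at that crossing, via the Vassiliev resolution. Hence every arbitrary $+$ versus $-$ choice, both for the former flat crossings and for the classical crossings introduced incidentally during chord contraction, only alters $K$ within its coset modulo $\mathcal{K}_{m+1}$.

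For invariance under the equivalence on $\mathcal{C}^{RV}$, the relation is generated by flat/virtual Reidemeister-type moves on the underlying skeleton together with sliding chord endpoints past flat (but not virtual) crossings. The purely virtual moves and mixed $R3$ lift directly to equivalences in $\mathcal{K}_m$, while the flat $R2$ and $R3$ moves lift to their classical counterparts once crossing signs are picked, any sign discrepancy being absorbed by the previous step. The subtle case is a chord endpoint sliding past a flat crossing: after contraction this is an $R3$-type slide of a singular crossing past a classical crossing, and the difference between the two configurations is again a degree-$(m+1)$ singular knot by the resolution applied to that classical crossing.

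For $4T$, I would adapt the usual argument from classical Vassiliev theory. The four diagrams in Figure \ref{fig:4T_virtual} differ only by moving one chord endpoint across three consecutive strands, so after contraction they become four singular diagrams related by successive slides of a singular crossing past classical crossings; each slide changes the class in $\mathcal{K}_m$ by a degree-$(m+1)$ element (by the previous step), and the signs in $4T$ are designed precisely so that these corrections telescope to zero in the quotient. The $CD$ relation is simpler: it is a virtual detour of a small tangle of chord attachments, which after contraction becomes the detour move of Figure \ref{fig:detour} applied to a tangle of singular crossings, hence an honest equality in $\mathcal{K}_m$. The main obstacle is the bookkeeping of the telescoping $\mathcal{K}_{m+1}$-corrections in the $4T$ step, but this is a direct transcription of the classical argument; the virtual crossings play no role in the local $4T$ picture, so no genuinely new phenomenon arises from the virtual setting.
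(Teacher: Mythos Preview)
Your approach is the same as the paper's: defer to the classical argument (the paper cites \cite{jackson2019introduction}) for $4T$, and observe that $CD$ becomes an instance of the detour move after contraction. The paper's proof consists of exactly those two lines; your additional checks on sign independence and on invariance under the equivalence defining $\mathcal{C}^{RV}$ are correct and more thorough than the paper, which absorbs the former into the definition of $\psi_m$ and leaves the latter implicit.

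One inaccuracy in your $4T$ sketch, however: after contraction \emph{both} chords in the local $4T$ picture become singular crossings, so the four terms differ in the relative position of two singular crossings on three strands, not by ``slides of a singular crossing past classical crossings''. There are no classical crossings in that local picture. The singular-past-classical slide is precisely what you used, correctly, in the preceding paragraph to handle chord endpoints moving past flat crossings; invoking it again here does not yield $4T$, whose classical proof instead identifies the alternating sum with (the resolution of) a degree-$(m{+}1)$ configuration involving the two singular crossings. Since you ultimately defer to the classical reference this does not invalidate your argument, but the sketch as written would not go through on its own.
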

\begin{proof}
The fact that $\psi_m$ satisfies $4T$ is known from the case of classical knots \cite{jackson2019introduction}. That $\psi_m$ satisfies $CD$ follows immediately from applying the detour move to the region of diagram shown on the right in Figure \ref{fig:contraction}.
\end{proof}

\subsection{Weight Systems}\label{subsec:weights}

In this section we set up some basic algebraic formalism for Vassiliev invariants mimicking that for the classical case. For this purpose, this subsection follows the notation of \cite{jackson2019introduction}. We begin with the following observation:

\begin{lemma}
Let $\theta\in\mathcal{V}_m$ and $K,K'\in\mathcal{K}^\bullet_m$. Assume that $\phi_m(K)=\phi_m(K')$. Then $\theta(K)=\theta(K')$.
\end{lemma}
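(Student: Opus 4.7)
The plan is to reduce the lemma to the defining vanishing property of $\mathcal{V}_m$. Concretely, I aim to show that the difference $K - K' \in \mathcal{K}$ lies in the subspace $\mathcal{K}_{m+1}$, from which the conclusion follows immediately: since $\theta$ vanishes on $\mathcal{K}_{m+1}$ by definition of a degree $m$ Vassiliev invariant, we get $\theta(K) - \theta(K') = \theta(K-K') = 0$.

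To establish $K-K' \in \mathcal{K}_{m+1}$, I would unpack the hypothesis $\phi_m(K) = \phi_m(K')$: after flattening all classical and singular crossings of both knots and attaching chords at their singular crossings, the resulting rotational virtual chord diagrams are equivalent in $\mathcal{C}^{RV}_m$. This equivalence is generated by the flat rotational virtual moves, subject to the chord-tracking rule of Figure \ref{fig:chord_sliding}. The heart of the proof is to lift this sequence of flat moves on $\phi_m(K)$ back to operations on the singular knot $K$ itself, modulo terms in $\mathcal{K}_{m+1}$. For each flat move in the sequence (a flat $R2$, $R3$, mixed $R3$, or a virtual move), one can perform the corresponding rotational virtual knot move on $K$ once over/under data is chosen at the classical crossings involved. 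Whenever the resulting over/under pattern fails to match the one needed to reach $K'$, the discrepancy at any single classical crossing can be absorbed using the Vassiliev skein relation $K_+ = K_- + K_\bullet$, and the correction term $\pm K_\bullet$ carries one more singular crossing and thus belongs to $\mathcal{K}_{m+1}$.

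Iterating this argument across the full sequence of flat moves, and keeping track of the accumulated correction terms, one expresses $K - K' = \sum_i \varepsilon_i K_i$ with each $K_i \in \mathcal{K}_{m+1}$ and $\varepsilon_i \in \{\pm 1\}$. This exhibits $K - K'$ as an element of $\mathcal{K}_{m+1}$, and applying $\theta$ yields $\theta(K) = \theta(K')$ as desired.

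The main obstacle is the bookkeeping in the lifting step: one must verify move-by-move that each generating flat equivalence on the chord diagram is realised by a combination of genuine rotational virtual Reidemeister moves on the singular knot together with Vassiliev-skein corrections, while respecting the rule that chord endpoints may not slide past virtual crossings. In particular one must confirm that the correction terms really do have $m+1$ singular crossings (rather than failing to apply the skein because of some virtual obstruction) and that the detour move plus the forbidden-move constraints on rotational virtual diagrams do not create additional complications. Once this case analysis is in hand, the rest of the argument is a direct application of the definition of $\mathcal{V}_m$.
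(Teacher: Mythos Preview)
Your proposal is correct and follows essentially the same route as the paper's proof: both reduce the statement to showing $K-K'\in\mathcal{K}_{m+1}$ by arguing that $\phi_m(K)=\phi_m(K')$ forces $K$ and $K'$ to be related by rotational virtual equivalences together with classical crossing switches, and then invoke the vanishing of $\theta$ on $\mathcal{K}_{m+1}$. The only difference is granularity: the paper compresses your entire lifting discussion into the phrase ``by construction of the definition of rotational virtual chord diagrams,'' while you make the move-by-move lift and the Vassiliev-skein correction terms explicit.
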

\begin{proof}
By construction of the definition of rotational virtual chord diagrams it follows that if $\phi_m(K)=\phi_m(K')$ then $K$ and $K'$ are related by crossing changes and equivalences of rotational virtual knots. Therefore as $K$ and $K'$ are singular of degree $m$ and differ only by elements of $\mathcal{K}_{m+1}$ we must have $\theta(K)=\theta(K')$.
\end{proof}

With this lemma one easily concludes the following:

\begin{corollary}
Let $\theta\in\mathcal{V}_m$. Then $\theta$ factors uniquely through $\phi_m$ via a map $\alpha_m(\theta):\mathcal{C}^{RV}_m\to \mathbb{C}$ to give a commutative diagram
\[
\begin{tikzcd}
\mathcal{K}_m^\bullet \arrow[rd, "\theta"'] \arrow[r, "\phi_m"] & \mathcal{C}^{RV}_m \arrow[d, "\alpha_m(\theta)", dashed] \\
                                                        & \mathbb{C}                                              
\end{tikzcd}
\]
\end{corollary}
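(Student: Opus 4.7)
The plan is to define $\alpha_m(\theta)$ on the basis of $\mathcal{C}^{RV}_m$ consisting of (rotational virtual) chord diagrams with $m$ chords, and then extend linearly. The preceding lemma already supplies the content that ensures well-definedness; what remains is essentially a universal-property argument, together with a surjectivity check so that every basis element actually gets a value assigned.

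First I would verify that $\phi_m$ surjects onto the chord-diagram basis of $\mathcal{C}^{RV}_m$: given any chord diagram $C$ with $m$ chords, contract each of its chords as in Figure \ref{fig:contraction} and resolve each flat crossing of $C$ arbitrarily into a classical crossing, obtaining a singular rotational virtual knot $K \in \mathcal{K}^\bullet_m$. By construction of $\phi_m$, applying the chord-placement-and-flattening procedure to $K$ reproduces $C$, so $\phi_m(K) = C$. This is the same local move used in defining $\psi_m$, now viewed at the level of representatives in $\mathcal{K}^\bullet_m$ rather than classes in $\mathcal{K}_m / \mathcal{K}_{m+1}$.

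Next, for each basis chord diagram $C$ I would fix such a lift $K_C$ and set $\alpha_m(\theta)(C) \defeq \theta(K_C)$. The preceding lemma guarantees this value is independent of the choice of lift, since any two lifts have the same image under $\phi_m$ and therefore the same value under $\theta$. Extending linearly gives a linear map $\alpha_m(\theta):\mathcal{C}^{RV}_m \to \mathbb{C}$. Commutativity of the triangle holds on the level of singular-knot representatives: for any $K \in \mathcal{K}^\bullet_m$, both sides of $\alpha_m(\theta)(\phi_m(K)) = \theta(K)$ equal $\theta(K_{\phi_m(K)})$, using the definition of $\alpha_m(\theta)$ on the left and the lemma on the right.

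For uniqueness, any linear $\alpha:\mathcal{C}^{RV}_m \to \mathbb{C}$ making the diagram commute is forced to satisfy $\alpha(C) = \alpha(\phi_m(K_C)) = \theta(K_C) = \alpha_m(\theta)(C)$ on every basis element $C$, and hence to coincide with $\alpha_m(\theta)$ everywhere by linearity. I do not expect any serious obstacle here; the substantive content is in the preceding lemma, and the only point meriting care is the surjectivity of $\phi_m$ onto the basis of chord diagrams, since without it one could modify $\alpha_m(\theta)$ off the image of $\phi_m$ and uniqueness would fail.
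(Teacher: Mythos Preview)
Your proposal is correct and follows essentially the same approach as the paper: define $\alpha_m(\theta)(C)=\theta(K)$ for any preimage $K$ of $C$ under $\phi_m$, invoke surjectivity of $\phi_m$ for existence, and use the preceding lemma for well-definedness. You are somewhat more explicit than the paper in constructing the lift via chord contraction and in spelling out the uniqueness argument, but the structure of the argument is the same.
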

\begin{proof}
Clearly to satisfy the commutativity $\alpha_m(\theta)$ must be defined by $C\mapsto \theta(K)$, where $K$ satisfies $\phi_m(K)=C$. Such $K$ exists as $\phi_m$ is clearly surjective. So the result follows if this assignment is well-defined, i.e.~independent of the choice of $K$. Suppose $L$ is any singular rotational virtual knot such that $\phi_m(L)=C$. Then $K$ and $L$ must differ by crossing changes of non-singular crossings. Since $K$, $L$, and $\theta$ are all degree $m$, it follows that $\theta(K)=\theta(L)$ as required.
\end{proof}

As a result, we obtain a linear map
\[
    \alpha_m: \mathcal{V}_m \to (\mathcal{C}^{RV}_m)^*
\]
that will be of particular interest to us.


The kernel of $\alpha_m$ follows immediately from the definitions (see the proof of \cite[Lm.~11.18]{jackson2019introduction} for details):

\begin{lemma}
We have that $\text{ker}(\alpha_m) = \mathcal{V}_{m-1}$ for all $m\in\mathbb{N}_{\geq1}$.
\end{lemma}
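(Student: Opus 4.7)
The plan is to prove the two inclusions $\mathcal{V}_{m-1}\subseteq \ker(\alpha_m)$ and $\ker(\alpha_m)\subseteq\mathcal{V}_{m-1}$ separately by unwinding the definitions of $\alpha_m$ and of the filtration on $\mathcal{K}$, together with a small amount of bookkeeping about Vassiliev resolution.

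For the forward inclusion, I would take $\theta\in\mathcal{V}_{m-1}$, so by definition $\theta$ vanishes on $\mathcal{K}_m$. Every $K\in\mathcal{K}_m^\bullet$ defines, via Vassiliev resolution, an element of $\mathcal{K}_m\subseteq\mathcal{K}$. Consequently $\theta(K)=0$ for all such $K$. Since $\alpha_m(\theta)$ is characterized by $\alpha_m(\theta)(\phi_m(K))=\theta(K)$ and $\phi_m$ is surjective onto $\mathcal{C}^{RV}_m$, this forces $\alpha_m(\theta)=0$.

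For the reverse inclusion I would assume $\theta\in\mathcal{V}_m$ with $\alpha_m(\theta)=0$ and aim to show $\theta$ vanishes on all of $\mathcal{K}_m$. The commutative triangle defining $\alpha_m(\theta)$ immediately gives $\theta(K)=0$ for every $K\in\mathcal{K}_m^\bullet$, i.e.~for every degree $m$ singular rotational virtual knot. The observation that clinches the argument is that $\mathcal{K}_m$ is spanned by the images in $\mathcal{K}$ of degree $m$ singular knots under Vassiliev resolution: iterating the resolution of Figure \ref{fig:resolution} on one of the singular crossings of a degree $m+k$ singular knot expresses it as a signed sum of degree $m+k-1$ singular knots, so by induction every degree $\geq m$ singular knot lies in the span of $\phi_m^{-1}$-type generators. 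Hence $\theta$ vanishes on a spanning set of $\mathcal{K}_m$ and therefore on all of $\mathcal{K}_m$, which is exactly the statement $\theta\in\mathcal{V}_{m-1}$.

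There is no real obstacle here: the only step that requires a moment's thought is the spanning claim for $\mathcal{K}_m$, which is a direct iteration of Vassiliev resolution and mirrors the analogous fact in the classical theory. No virtual- or rotational-specific subtleties enter, since the resolution of Figure \ref{fig:resolution} is purely local and never involves virtual crossings.
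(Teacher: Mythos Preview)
Your proof is correct and is exactly the standard unpacking of the definitions that the paper alludes to (the paper gives no argument beyond ``immediate from the definitions'' and a citation to \cite[Lm.~11.18]{jackson2019introduction}). One small remark: the inductive spanning argument in your reverse inclusion is more than you need, since by the paper's definition $\mathcal{K}_m$ is \emph{literally} the span of (Vassiliev resolutions of) degree-$m$ singular knots, i.e.~of $\mathcal{K}_m^\bullet$; the fact that higher-degree singular knots also land in $\mathcal{K}_m$ is the content of the filtration lemma, already established.
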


We conclude that each $\alpha_m$ factors through an injective map
\[
    \overline{\alpha}_m: \frac{\mathcal{V}_m}{\mathcal{V}_{m-1}} \to (\mathcal{C}^{RV}_m)^*.
\]

Next we are interested in the image of this map. In particular we have the following lemma:
\begin{lemma}\label{lm:weight_sys}
Let $\theta\in \mathcal{V}_m$. Then $\alpha_m(\theta)$ respects both the $4T$ and $CD$ relations, i.e.~vanishes on the subspace we quotient $\mathcal{C}^{RV}_m$ by to obtain $\mathcal{A}^{RV}$.
\end{lemma}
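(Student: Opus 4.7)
The plan is to verify the two relations separately, using the identification $\alpha_m(\theta)(C) = \theta(K)$ for any degree-$m$ singular rotational virtual knot $K$ with $\phi_m(K)=C$ (which is well-defined by the preceding corollary).

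For the $4T$ relation I would mimic the classical derivation verbatim. The four terms in $4T$ differ only inside a small disk of the skeleton containing three short arcs and two chord endpoints; outside this disk the diagrams agree. Choose singular rotational virtual knots $K_1,K_2,K_3,K_4\in\mathcal{K}^\bullet_m$ with $\phi_m(K_i)=C_i$ coinciding away from the disk. The standard Vassiliev argument (see e.g.~\cite{jackson2019introduction}) exhibits the signed combination $K_1-K_2-K_3+K_4$ in $\mathcal{K}$ as the twofold Vassiliev resolution of a degree-$(m{+}1)$ singular rotational virtual knot supported in the same disk — concretely, resolve the extra singular crossing via the skein of Figure \ref{fig:resolution} and collect the four terms. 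Since $\theta\in\mathcal{V}_m$ vanishes on $\mathcal{K}_{m+1}$, applying $\theta$ gives $\sum_i(\pm)\alpha_m(\theta)(C_i)=0$. The virtual crossings potentially present elsewhere in the skeleton are immaterial because the entire combinatorial manipulation takes place inside a disk free of virtual structure.

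For the $CD$ relation I would exploit the detour move directly. Given two chord diagrams $C,C'$ related by the chord detour of Figure \ref{fig:CD}, pick preimages $K,K'\in\mathcal{K}^\bullet_m$ under $\phi_m$ that coincide away from the detoured region. The chord being rerouted corresponds, after contraction à la $\psi_m$, to a singular crossing on an arc of $K$ that now traverses (virtually) a portion of diagram that it does not traverse in $K'$. The detour move of rotational virtual knots (a consequence of $vR2$, $vR3$, and the mixed $R3$, as recorded in Figure \ref{fig:detour}) allows exactly such an arc, singular crossing included, to be rerouted virtually. Hence $K$ and $K'$ are equivalent as rotational virtual knots, in particular equal in $\mathcal{K}$, and so $\theta(K)=\theta(K')$, yielding $\alpha_m(\theta)(C)=\alpha_m(\theta)(C')$.

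The main obstacle is conceptual rather than computational: I must be sure that the classical $4T$ bookkeeping genuinely survives the enlargement of the skeleton from a circle to a flat rotational virtual knot, and that the detour move is applicable to an arc carrying a singular crossing. Both reduce to the fact that $4T$ is a purely local relation in a virtual-crossing-free disk, while $CD$ is precisely the chord-level manifestation of the detour move already present in our equivalence of rotational virtual diagrams; no new technical ingredient is needed.
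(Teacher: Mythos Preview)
Your treatment of $4T$ matches the paper's: both defer to the classical argument, observing that the relevant manipulation is local in a disk containing no virtual crossings.

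For $CD$, however, your approach diverges from the paper's and contains a gap. The paper does \emph{not} argue that suitable preimages $K,K'$ are related by a detour move. Instead it first inserts a pair of flat crossings near the region via the flat $R2$ move, then proves the $CD$ identity in the presence of that extra flat crossing by translating back and forth through $W(\phi_m(-))=\theta(-)$ (this is the content of Figure~\ref{fig:CD_proof}), and finally removes the auxiliary flat crossings.

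The reason this extra step is needed is precisely the point your argument glosses over. If $C$ and $C'$ differ only by a chord--skeleton crossing, then any singular preimage $K$ with $\phi_m(K)=C$ must have a skeleton arc threading between the two strands of the relevant singular crossing --- and that arc must cross those strands somehow. Those crossings are either classical (becoming \emph{flat} crossings in $\phi_m(K)$) or virtual (surviving as \emph{virtual} crossings in $\phi_m(K)$). Either way, $\phi_m(K)$ carries extra structure beyond the bare chord--arc crossing of $C$. Your proposed detour move would reroute the arc by creating or destroying virtual crossings, but then $\phi_m$ of the result no longer equals $C'$ on the nose: the virtual crossings you introduced persist in the chord diagram. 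So the claim that one can choose $K\sim K'$ with $\phi_m(K)=C$ and $\phi_m(K')=C'$ exactly is not justified. The paper's flat-$R2$ trick is what bridges this discrepancy: it manufactures the flat crossings that inevitably appear when one passes through $\phi_m$, does the computation there, and then removes them.
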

\begin{proof}
Let $W= \alpha_m(\theta)$. The statement for the $4T$ relation is \cite[Lm.~11.24]{jackson2019introduction}. For the $CD$ relation note that for any degree $m$ singular rotational virtual knot $K$ we have $W(\phi_m(K))=\theta(K)$ by definition. From this we conclude the identities given in Figure \ref{fig:CD_proof}.
\begin{figure}[ht]
    \centering
    \includegraphics[width=\linewidth]{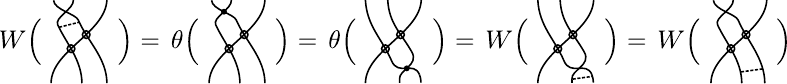}
    \caption{Proof that $\alpha_m(\theta)$ satisfies the $CD$ relation.}
    \label{fig:CD_proof}
\end{figure}

Figure \ref{fig:CD_proof} shows the result in the pretense of a flat crossing near the $CD$ relation. Since flat crossings can be created (and later removed) in pairs anywhere at will by an application of the flat $R2$ relation, the desired result follows.
\end{proof}

Lemma \ref{lm:weight_sys} tells us that each $\overline{\alpha}_m$ produces linear maps $\mathcal{C}^{RV}_m\to \mathbb{C}$ that factor through $\mathcal{A}^{RV}_m$. We are naturally interested in such maps, as they define elements of the dual of $\mathcal{A}^{RV}$. We call these maps \textit{weight systems}:

\begin{definition}
We define a degree $m$ \textbf{weight system} to be a linear map $W:\mathcal{A}^{RV}_m\to \mathbb{C}$. Note that weight systems are in bijection with linear maps $\mathcal{C}^{RV}_m\to \mathbb{C}$ factoring through the quotient of $\mathcal{C}^{RV}_m$ by the $4T$ and $CD$ relations. We denote the vector space of degree $m$ weight systems by $\mathcal{W}_m$, and the vector space of all weight systems by $\mathcal{W}$. In other words, $\mathcal{W}_m=(\mathcal{A}^{RV}_m)^*$ and $\mathcal{W}=(\mathcal{A}^{RV})^*$.
\end{definition}

So we conclude $\text{Im}(\overline{\alpha}_m)\subseteq \mathcal{W}_m$. It is now natural to ask whether this inclusion is in fact an equality. Although we have not developed the tools here to answer this question, we give a brief discussion below of what would be needed to do so, and what the implications of an affirmative answer would be. The following algebraic lemma, which is again an immediate consequence of the definitions, will be helpful:

\begin{lemma}\label{lm:duals}
Under the identifications
\[
    \frac{\mathcal{V}_m}{\mathcal{V}_{m-1}} \cong \left( \frac{\mathcal{K}_m}{\mathcal{K}_{m+1}} \right)^*
    \qquad \text{ and }\qquad
    (\mathcal{A}^{RV}_m)^* = \mathcal{W}_m,
\]
we have $\psi_m^*=\overline{\alpha}_m$.
\end{lemma}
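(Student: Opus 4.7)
The plan is to check $\psi_m^{*} = \overline{\alpha}_m$ pointwise by unwinding the definitions. Fix $\theta \in \mathcal{V}_m$ and let $\tilde\theta \in (\mathcal{K}_m/\mathcal{K}_{m+1})^{*}$ be the functional it induces under the first identification of the lemma, characterized by $\tilde\theta([K]) \defeq \theta(K)$ for any degree $m$ singular rotational virtual knot $K$. This is well-defined because $\theta$ vanishes on $\mathcal{K}_{m+1}$, and it descends to a map out of $\mathcal{V}_m/\mathcal{V}_{m-1}$ because any $\theta \in \mathcal{V}_{m-1}$ already vanishes on $\mathcal{K}_m$. Now fix a chord diagram $C \in \mathcal{A}^{RV}_m$; the goal is to show $\overline{\alpha}_m([\theta])(C) = \psi_m^{*}(\tilde\theta)(C)$.

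Unpacking the right-hand side gives $\psi_m^{*}(\tilde\theta)(C) = \tilde\theta(\psi_m(C)) = \theta(K_1)$, where $K_1 \in \mathcal{K}^{\bullet}_m$ is the explicit representative produced by the chord-contraction construction: resolve each flat crossing of $C$ arbitrarily into a classical crossing, then contract each chord into a singular crossing. Unpacking the left-hand side gives $\overline{\alpha}_m([\theta])(C) = \alpha_m(\theta)(C) = \theta(K_0)$ for any $K_0 \in \mathcal{K}^{\bullet}_m$ with $\phi_m(K_0) = C$. Consequently the equality reduces to showing that this particular $K_1$ satisfies $\phi_m(K_1) = C$ in $\mathcal{A}^{RV}_m$, since then we may take $K_0 = K_1$ and both expressions are literally the same number $\theta(K_1)$.

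To verify $\phi_m(K_1) = C$ in $\mathcal{A}^{RV}_m$ I would trace the two constructions: contracting a chord of $C$ produces one classical crossing at the former chord site (which becomes the singular crossing of $K_1$), together with some additional classical crossings wherever the contracting arc crosses other strands of the underlying diagram. Applying $\phi_m$ to $K_1$ then places a chord back at the singular crossing and flattens every other classical crossing, recovering $C$ plus a collection of spurious flat crossings introduced by the contraction. These spurious crossings can be removed in $\mathcal{A}^{RV}_m$ by the flat $R2$ move together with the detour move, following exactly the argument used at the end of the proof of Lemma \ref{lm:weight_sys}, which allows us to create or cancel pairs of flat crossings freely without altering the class in $\mathcal{A}^{RV}_m$.

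The step I expect to be the main obstacle is the bookkeeping in this last verification: one has to confirm that no matter how the contraction path of a chord winds through the virtual, flat, and other chord portions of $C$, the collection of spurious flat crossings it creates can always be cancelled in $\mathcal{A}^{RV}_m$ without disturbing the remaining chord structure. Once the identity $\phi_m \circ \psi_m = \mathrm{id}_{\mathcal{A}^{RV}_m}$ is secured, the chain $\overline{\alpha}_m([\theta])(C) = \theta(K_1) = \tilde\theta(\psi_m(C)) = \psi_m^{*}(\tilde\theta)(C)$ holds for arbitrary $\theta$ and $C$, yielding $\psi_m^{*} = \overline{\alpha}_m$.
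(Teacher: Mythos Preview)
Your unwinding of the definitions is correct and is exactly what the paper intends (the lemma is stated as ``an immediate consequence of the definitions'' with no proof given). The step you flag as the main obstacle, however, is not really one: in the paragraph introducing $\psi_m$ the paper describes it as ``the `chord-contracting' map \dots that produces from a chord diagram $C$ an equivalence class $[K]\in\mathcal{K}_m/\mathcal{K}_{m+1}$ such that $\phi_m(K)=C$'', so the identity $\phi_m(K_1)=C$ is the defining intent of the construction and can simply be quoted. If you do want to verify it from the explicit contraction recipe, the right tools are not the detour move (which concerns \emph{virtual} crossings) or the flat-$R2$ trick at the end of Lemma~\ref{lm:weight_sys}, but rather the allowed chord slide past flat crossings (left of Figure~\ref{fig:chord_sliding}) together with the flat Reidemeister moves: slide the chord endpoint back along the contracting path, then retract the skeleton. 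This already yields equality in $\mathcal{C}^{RV}_m$, so no $4T$ or $CD$ relations are needed.
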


By construction, $\overline{\alpha}_m$ is injective. As a corollary of this lemma, we deduce $\psi_m$ is surjective. We would like to show that $\overline{\alpha}_m$ is moreover surjective, for this would provide an isomorphism $\mathcal{V}_m/\mathcal{V}_{m-1}\cong \mathcal{W}_m$. This isomorphism would tell us that studying Vassiliev invariants is equivalent to studying $(\mathcal{A}^{RV})^*$, vindicating our definition of $\mathcal{A}^{RV}$.

By Lemma \ref{lm:duals}, to show that $\overline{\alpha}_m$ is surjective it suffices to show $\psi_m$ is injective. This would follow easily if one possessed a `universal Vassiliev invariant', which takes values in the vector space $\hat{\mathcal{A}}^{RV}$, the graded completion of $\mathcal{A}^{RV}$ consisting of infinite formal linear combinations of chord diagrams.

\begin{definition}\label{def:universal}
A \textbf{universal Vassiliev invariant} of rotational virtual knots is a rotational virtual knot invariant $\check{Z}:\mathcal{K}\to \hat{\mathcal{A}}^{RV}$ such that for any singular knot $K$, the lowest order term of $\check{Z}(K)$ is equal to the chord diagram of $K$. Note that, by surjectivity of $\psi_m$, this is equivalent to demanding that
\[
    \check{Z}(\psi_m(C)) = C + [\text{terms of degree} > m]
\]
for all $C\in \mathcal{A}^{RV}_m$.
\end{definition}

In the classical theory for framed knots such a universal Vassiliev invariant is provided by the combinatorial Kontsevich invariant \cite{ohtsuki2002quantum}. The extension of this invariant to the rotational virtual setting therefore makes for a worthwhile direction for further research. 
We end this subsection with some consequences to the existence of a universal Vassiliev invariant. So for the remainder of this subsection, we work under the assumption that there exists a universal Vassiliev invariant $\check{Z}$ of rotational virtual knots.


\begin{proposition}\label{prop:universal}
Assume we have a universal Vassiliev invariant $\check{Z}$ of rotational virtual knots. Then there exists a map $\check{Z}_m:\mathcal{K}_m/\mathcal{K}_{m+1}\to \mathcal{A}_m^{RV}$ acting element-wise as
\[
    \check{Z}_m: [K]\mapsto p_m(\check{Z}(K)),
\]
where $p_m$ is the projection $\hat{\mathcal{A}}\to\mathcal{A}_m$. Moreover $\psi_m$ is a vector space isomorphism with $\psi_m^{-1} = \check{Z}_m$.
\end{proposition}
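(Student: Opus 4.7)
The plan is to verify three things in order: well-definedness of $\check{Z}_m$ as a map on the quotient $\mathcal{K}_m/\mathcal{K}_{m+1}$, the identity $\check{Z}_m \circ \psi_m = \mathrm{id}_{\mathcal{A}^{RV}_m}$, and the identity $\psi_m \circ \check{Z}_m = \mathrm{id}_{\mathcal{K}_m/\mathcal{K}_{m+1}}$. Together these give that $\psi_m$ is a vector space isomorphism with inverse $\check{Z}_m$, as claimed.

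For well-definedness, the task is to show that if $L \in \mathcal{K}_{m+1}$ then $p_m(\check{Z}(L)) = 0$. By linearity it suffices to check this on a single singular rotational virtual knot $L$ of degree $d \geq m+1$. The primary formulation of Definition \ref{def:universal} says that the lowest order term of $\check{Z}(L)$ equals the chord diagram $\phi_d(L)$, which sits in degree $d$. So $\check{Z}(L)$ has no components in degrees $\leq m$, and in particular $p_m(\check{Z}(L)) = 0$. Consequently the assignment $[K] \mapsto p_m(\check{Z}(K))$ descends to a well-defined linear map $\mathcal{K}_m/\mathcal{K}_{m+1} \to \mathcal{A}^{RV}_m$.

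For the first composition identity, take $C \in \mathcal{A}^{RV}_m$ and let $L \in \mathcal{K}_m$ be any representative of the class $\psi_m(C)$. The second formulation in Definition \ref{def:universal} then gives $\check{Z}(L) = C + [\text{terms of degree} > m]$, so applying $p_m$ yields $\check{Z}_m(\psi_m(C)) = C$. For the opposite composition, first note that $\psi_m$ is surjective, as established in the excerpt from the injectivity of $\overline{\alpha}_m = \psi_m^*$ via Lemma \ref{lm:duals}. Given any $[K] \in \mathcal{K}_m/\mathcal{K}_{m+1}$, write $[K] = \psi_m(C)$ for some $C$ by surjectivity; the previous identity then immediately yields $\psi_m(\check{Z}_m([K])) = \psi_m(\check{Z}_m(\psi_m(C))) = \psi_m(C) = [K]$. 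The argument has no essential obstacle, being a direct formal consequence of Definition \ref{def:universal} and Lemma \ref{lm:duals}; the genuine difficulty is deferred to actually constructing such a $\check{Z}$ in Section \ref{sec:kontsevich}.
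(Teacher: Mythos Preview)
Your proposal is correct and follows essentially the same approach as the paper: both establish well-definedness of $\check{Z}_m$ by showing $p_m\circ\check{Z}$ vanishes on $\mathcal{K}_{m+1}$, then use Definition~\ref{def:universal} to get $\check{Z}_m\circ\psi_m=\mathrm{id}$, and finally combine this with the already-known surjectivity of $\psi_m$ to conclude that $\psi_m$ is an isomorphism with inverse $\check{Z}_m$. The only cosmetic difference is that the paper phrases the last step as ``left inverse $\Rightarrow$ injective, plus surjective $\Rightarrow$ isomorphism'', whereas you verify the other composition $\psi_m\circ\check{Z}_m=\mathrm{id}$ directly via surjectivity; these are equivalent formal manipulations.
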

\begin{proof}
By definition of $\check{Z}$ we have $\check{Z}(\mathcal{K}_m)\subseteq \hat{\mathcal{A}}^{RV}_{\geq m}$ and hence we obtain a map
\[
    \check{Z}_{\geq m} : \mathcal{K}_m\to \hat{\mathcal{A}}^{RV}_{\geq m}.
\]
Composing this map with the projection $p_m: \hat{\mathcal{A}}^{RV}_{\geq m}\to \hat{\mathcal{A}}^{RV}_{\geq m}/\hat{\mathcal{A}}^{RV}_{\geq m+1}\cong \mathcal{A}^{RV}_{m}$ then gives a map
\[
    p_m \circ \check{Z}_{\geq m} : \mathcal{K}_m\to \mathcal{A}^{RV}_{m}.
\]
Again by the definition of $\check{Z}$, this map vanishes on $\mathcal{K}_{m+1}\subseteq \mathcal{K}_m$, and hence factors into a map
\[
    \check{Z}_m: \frac{\mathcal{K}_m}{\mathcal{K}_{m+1}} \to \mathcal{A}^{RV}_m
\]
that is as required for the theorem. The fact that $\check{Z}_m\circ \psi_m = \text{id}$ is immediate from the definition of $\check{Z}$. Since $\psi_m$ has a left inverse it must be injective and therefore an isomorphism. We conclude that $\psi_m^{-1} = \check{Z}_m$.
\end{proof}


\begin{corollary}\label{cor:equivalence}
Assuming we have a universal Vassiliev invariant $\check{Z}$, the map $\overline{\alpha}_m$ provides an isomorphism $\mathcal{V}_m/\mathcal{V}_{m-1}\cong \mathcal{W}_m$ for all $m$. We therefore obtain an isomorphism
\[
\begin{tikzcd}
\mathcal{V}_m \arrow[r, "\cong", phantom] \arrow[d, "\cong"'] & \frac{\mathcal{V}_m}{\mathcal{V}_{m-1}} \arrow[r, "\oplus", phantom] \arrow[d, "\overline{\alpha}_m"'] & \frac{\mathcal{V}_{m-1}}{\mathcal{V}_{m-2}} \arrow[r, "\oplus", phantom] \arrow[d, "\overline{\alpha}_{m-1}"'] & \dots \arrow[r, "\oplus", phantom] \arrow[d, "\dots", phantom] & \frac{\mathcal{V}_1}{\mathcal{V}_{0}} \arrow[d, "\overline{\alpha}_1"'] \arrow[r, "\oplus", phantom] & \mathcal{V}_0 \arrow[d, "\overline{\alpha}_0"'] \\
\mathcal{W}_{\leq m} \arrow[r, "\cong", phantom]              & \mathcal{W}_m \arrow[r, "\oplus", phantom]                                                             & \mathcal{W}_{m-1} \arrow[r, "\oplus", phantom]                                                                 & \dots \arrow[r, "\oplus", phantom]                             & \mathcal{W}_1 \arrow[r, "\oplus", phantom]                                                           & \mathcal{W}_0                                  
\end{tikzcd}
\]
telling us that the study of Vassiliev invariants is equivalent to that of weight systems, which is in turn equivalent to studying the structure of $\mathcal{A}^{RV}$.
\end{corollary}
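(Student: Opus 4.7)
The plan is to deduce the corollary directly from Proposition~\ref{prop:universal} and Lemma~\ref{lm:duals}, with essentially no new conceptual work required. Granting the universal Vassiliev invariant $\check Z$ (to be constructed in Section~\ref{sec:kontsevich}), Proposition~\ref{prop:universal} shows that $\psi_m : \mathcal{A}^{RV}_m \to \mathcal{K}_m/\mathcal{K}_{m+1}$ is a vector-space isomorphism with inverse $\check Z_m$. Dualizing preserves isomorphisms, so $\psi_m^*$ is an isomorphism as well. By Lemma~\ref{lm:duals}, $\psi_m^*$ is precisely $\overline{\alpha}_m$ under the canonical identifications $\mathcal{V}_m/\mathcal{V}_{m-1} \cong (\mathcal{K}_m/\mathcal{K}_{m+1})^*$ and $\mathcal{W}_m = (\mathcal{A}^{RV}_m)^*$. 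Therefore $\overline{\alpha}_m$ is itself an isomorphism, which is the main content of the corollary.

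For the diagram, I would read $\mathcal{W}_{\leq m}$ as notation for $\bigoplus_{k=0}^{m} \mathcal{W}_k$ and use the filtration-splitting $\mathcal{V}_m \cong \bigoplus_{k=0}^{m} \mathcal{V}_k/\mathcal{V}_{k-1}$ already recorded earlier in the section (with the convention $\mathcal{V}_{-1}=0$). The central vertical map is then the direct sum of the individual $\overline{\alpha}_k$, which is an isomorphism because each factor is; commutativity of each square is immediate from the construction of $\overline{\alpha}_k$ as the factorization of $\alpha_k$ through $\mathcal{C}^{RV}_k$. Composing with the splitting isomorphism on the left then yields the claimed global correspondence between $\mathcal{V}_m$ and $\mathcal{W}_{\leq m}$.

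The argument above has no internal obstacle beyond formal bookkeeping; the genuine difficulty is external, namely verifying the hypothesis that a universal Vassiliev invariant $\check Z$ exists in the rotational virtual setting. Everything in this subsection downstream of Definition~\ref{def:universal} is conditional on that construction, which forms the content of Section~\ref{sec:kontsevich} and is the real technical heart of the paper. I would therefore flag explicitly in the write-up that the corollary, while formally a trivial consequence of what precedes it, depends for its substance on the combinatorial Kontsevich invariant to be built later.
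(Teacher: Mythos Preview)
Your proposal is correct and matches the paper's own reasoning: the corollary is presented there without a separate proof, as an immediate consequence of Proposition~\ref{prop:universal} (giving $\psi_m$ an isomorphism) together with Lemma~\ref{lm:duals} (identifying $\psi_m^* = \overline{\alpha}_m$), exactly as you lay out. Your explicit remark that the result is conditional on the later construction of $\check Z$ is also in line with how the paper frames it.
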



Given a weight system $W:\mathcal{A}^{RV}\to\mathbb{C}$, recalling from the proof of Proposition \ref{prop:universal} that $\check{Z}(\mathcal{K}_{m+1})\subseteq \hat{\mathcal{A}}^{RV}_{\geq m+1}$ we conclude that the composite $W\circ\check{Z}_{\leq m}$ is a Vassiliev invariant. Here $\check{Z}_{\leq m}$ was defined in the proof of Proposition \ref{prop:universal}. The converse to this remark also holds:

\begin{proposition}
Let $\theta$ be a degree $m$ Vassiliev invariant of rotational virtual knots. Then there exists some degree $m$ weight system $W_\theta\in \mathcal{W}_m$ such that $\theta= W_\theta\circ \check{Z}_{\leq m}$.
\end{proposition}

\begin{proof}
This proof is analogous to that for classical framed knots; see \cite{jackson2019introduction}. Noting that 
\[
    \mathcal{V}_m\cong \frac{\mathcal{V}_m}{\mathcal{V}_{m-1}} \oplus \dots \oplus \frac{\mathcal{V}_1}{\mathcal{V}_0} \oplus \mathcal{V}_0
\]
we can write
\[
    \theta = (\theta_m,\dots,\theta_1,\theta_0).
\]
Then in light of the identification $\mathcal{V}_i/\mathcal{V}_{i-1}\cong (\mathcal{K}_i/\mathcal{K}_{i+1})^*$ we can define the degree $i$ weight systems $W_i=\theta_i\circ \psi_i:\mathcal{A}^{RV}_i\to \mathbb{C}$ and define the composite weight system $W = W_m+\dots + W_0$. We claim $W$ is as required. Indeed, let $\check{Z}_i=p_i\circ \check{Z}$ where $p_i:\hat{\mathcal{A}}^{RV}\to \hat{\mathcal{A}}^{RV}_i$ is the obvious projection map. Using $\psi_i^{-1}=\check{Z}_i$ from Proposition \ref{prop:universal} we see
\[
    W_i \circ \check{Z}_i = \theta_i\circ \psi_i \circ \check{Z}_i =\theta_i
\]
for all $i$. We conclude $W_\theta\circ \check{Z}_{\leq m}=\theta$ as maps on $\mathcal{K}/\mathcal{K}_{m+1}$. Since both maps are degree $m$ Vassiliev invariants they both vanish on $\mathcal{K}_{m+1}$, so this is sufficient to conclude they are also equal as maps on $\mathcal{K}$, as required.
\end{proof}

\subsection{Lie Algebra Weight Systems}\label{subsec:lie_ws}

In this subsection we briefly discuss examples of weight systems of rotational virtual chord diagrams coming from finite-dimensional Lie algebra representations, in analogy with the theory for classical knots. Throughout this subsection all chord diagrams are assumed to be \textit{oriented}. We let $\mathfrak{g}$ denote a semisimple Lie algebra over an algebraically closed field $k$ of characteristic zero. In particular as we have already restricted our attention to $\mathbb{C}$-valued weight systems, we may take $k=\mathbb{C}$. We assume some familiarity with semisimple Lie algebras, a brief exposition of which can be found in Appendix \ref{app:lie}. 

We will follow the construction of weight systems from Lie algebra representations via a Reshetikhin-Turaev construction, using a Morse decomposition of chord diagrams. This is the approach seen for classical knots in \cite{ohtsuki2002quantum}, for example.

\begin{definition}\label{def:RT_laws}
Let $(V,\rho)$ be a finite-dimensional representation of a semisimple Lie algebra $\mathfrak{g}$. Given a rotational virtual chord diagram $C$, decompose it as a series of horizontal and vertical juxtapositions of the diagram pieces listed in Figure \ref{fig:chord_pieces}. Now, in this decomposition of $C$ we locally interpret skeleton pieces as copies of $V$ if they are directed downwards, or as copies of $V^*$ if they are directed upwards. Pieces of chord are interpreted as copies of $\mathfrak{g}$, and horizontally juxtaposed line pieces are interpreted as the tensor product of their associated copies of $V$, $V^*$, or $\mathfrak{g}$. Correspondingly we interpret an empty part of the diagram as holding a copy of $\mathbb{C}$. This is well-defined since tensor products with $\mathbb{C}$ have no effect on $V$, $V^*$, or $\mathfrak{g}$. Under this interpretation the diagram pieces in Figure \ref{fig:chord_pieces} translate to maps between these tensor products of $V$, $V^*$, and $\mathfrak{g}$, as is indicated in the figure.

\begin{figure}[ht]
    \centering
    \includegraphics[width=\linewidth]{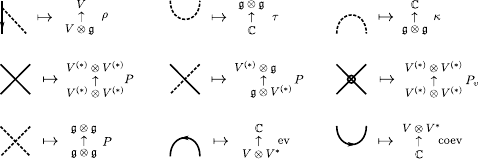}
    \caption{Elementary pieces of rotational virtual chord diagram and their representations as linear maps between copies of $\mathfrak{g}$, $V$, and $V^*$.}
    \label{fig:chord_pieces}
\end{figure}

The maps in Figure \ref{fig:chord_pieces} are defined as follows: $\kappa:\mathfrak{g}\otimes \mathfrak{g}\to \mathbb{C}$ is the Killing form, and $\tau\in \mathfrak{g}\otimes \mathfrak{g}$ is the invariant 2-tensor of $\mathfrak{g}$ (see Appendix \ref{app:lie}). Next $\text{ev}$ is the map $V\otimes V^*\to \mathbb{C}$ sending $v\otimes f$ to $f(v)\in \mathbb{C}$ and $\text{coev}:\mathbb{C}\to V\otimes V^*$ is the linear map sending $1$ to $\sum_{i=1}^{\text{dim}(V)} e_i\otimes e^i$ where $\{e_i\}$ is a basis for $V$ and $\{e^i\}$ is its dual basis for $V^*$. 
Finally $P$ is the trivial permutation $v\otimes w\mapsto w\otimes v$, and for the moment we also take the `virtual permutation' $P_v$ to be equal to $P$.

Now to construct $W_{\mathfrak{g},V}(C)$ we compose all the maps associated to our decomposition of $C$, in the order specified by this decomposition. An example of this is depicted in Figure \ref{fig:chord_RT_ex}. As rotational virtual chord diagrams are closed, the result will be a linear map $\mathbb{C}\to \mathbb{C}$. This map is uniquely determined by the image of $1$, and this image is defined to be $W_{\mathfrak{g},V}(C)$.

\begin{figure}[ht]
    \centering
    \includegraphics[width=.65\linewidth]{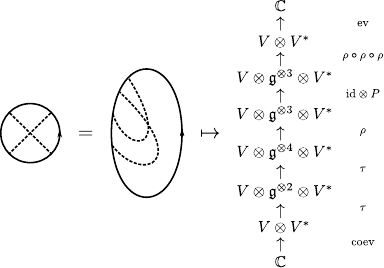}
    \caption{An example computation of the Reshetikhin-Turaev construction of $W_{\mathfrak{g},V}(C)$.}
    \label{fig:chord_RT_ex}
\end{figure}
\end{definition}

Note that by choosing $P_v=P$, we are essentially just replacing all virtual crossings by flat crossings. In this case we therefore retrieve the definition of Lie algebra weight systems of classical knots applied to virtual knots by neglecting all virtual structure. In other words, assigning the trivial permutations to virtual crossings gives rise to the following commutative diagram:
\begin{equation}\label{diag:LAWS}
\begin{tikzcd}
\mathcal{A}^{RV} \arrow[r, "F_v"] \arrow[rd, "\text{RT}"', dashed] & \mathcal{A} \arrow[d, "{W^K_{\mathfrak{g},V}}"] \\
                                                                  & \mathbb{C}                                     
\end{tikzcd}
\end{equation}
where $W^K_{\mathfrak{g},V}$ is the Lie algebra weight system for classical knots associated to $(\mathfrak{g},V)$ and $\text{RT}$ is the Reshetikhin-Turaev construction of $W_{\mathfrak{g},V}$ from Definition \ref{def:RT_laws}. 

In general, different choices of $P_v$ may be used to obtain more general Lie algebra weight systems of rotational virtual knots, that may have more distinguishing power than those we consider here. For such a choice of $P_v$ one only needs to verify that the result of Definition \ref{def:RT_laws} is invariant under the flat virtual Reidemeister moves and satisfies $CD$.

\begin{example}
Let $n$ be the dimension of the $\mathfrak{g}$-representation $V$. To satisfy the $CD$ relation it suffices to let
\[
    P_v = P\circ (M\otimes M)
\]
where $M$ is an intertwiner in $\text{End}(V)$ with respect to the action of $\mathfrak{g}$ on $V$, that is to say $M(x\triangleright v)= x\triangleright M(v)$ for all $x\in \mathfrak{g},v\in V$. One can check that this assignment also satisfies $vR3$ and $mR3$. To satisfy $vR2$ we must further have that $M^2=I$. So we obtain virtual Lie algebra weight systems for every self-inverse element of $\text{End}(V)$ intertwining the action of $\mathfrak{g}$. 

For example we may take $M$ to be diagonal with all nonzero entries equal to $\pm1$. The associated weight systems can distinguish chord diagrams on the virtual figure eight from their counterparts on an unknotted circle.
\end{example}

\section{Quantum Invariants}\label{sec:quantum}

In this section we recall the construction of universal quantum invariants of rotational virtual knots developed in \cite{kauffman2015rotational}, define Reshetikhin-Turaev invariant as evaluations of these, and give generalizations in the virtual setting.

We first recall the definition of the universal quantum invariant associated to a ribbon Hopf algebra introduced for rotational virtual links in \cite{kauffman2015rotational}. We follow the notational approach taken in \cite{bar2021perturbed,moltmaker2022new} by working with the `rotational virtual tangle category' $\texttt{RVT}$, into which rotational virtual knots naturally embed and to which quantum invariants naturally apply. 
In particular we will introduce quantum invariants associated to a ribbon Hopf algebra $A$ as factoring through a functor $Z:\texttt{RVT}\to \mathcal{H}$ from which extended quantum invariants also arise.
Here $\mathcal{H}$ is a category of formal ribbon Hopf algebra elements attached to knot diagrams. We assume familiarity with the basic theory of ribbon Hopf algebras; a brief summary of this topic can be found in Appendix \ref{app:hopf}. 

\begin{definition}
The \textbf{rotational virtual tangle category} $\texttt{RVT}$ is the category with objects the sets $\{(n)\}_{n\in\mathbb{N}}$ of $n$ unordered points, and with sets of morphisms consisting of \textit{oriented} rotational virtual tangles between these points (seen as equivalence classes of diagrams) whose tangent vectors are pointed downwards at the terminal points and whose terminal points may lie anywhere in the plane (except on other arcs of the diagram). See Figure \ref{fig:RVT_morphism} for an example.

\begin{figure}[ht]
    \centering
    \includegraphics[width=.22\linewidth]{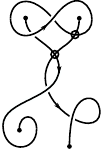}
    \caption{A morphism in $\texttt{RVT}$.}
    \label{fig:RVT_morphism}
\end{figure}
\end{definition}

The morphisms in $\texttt{RVT}$ are generated by the elementary diagram pieces depicted in Figure \ref{fig:RVT_pieces} in the following sense: any rotational virtual tangle diagram constituting a morphism in $\texttt{RVT}$ can be formed by `multiplications' of the endpoints of copies of these pieces, where a \textbf{multiplication of endpoints} is to be defined a \textbf{regular virtual arc} between them. A virtual arc is defined to be an arc that only makes virtual crossings, and can therefore be drawn in the plane in any way one pleases by virtue of the detour move. (In this category we extend the detour move to be allowed to detour across tangle end-points.) Such an arc is regular if its tangent vector makes no full turns when running along it.

\begin{figure}[ht]
    \centering
    \includegraphics[width=.5\linewidth]{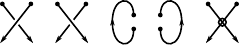}
    \caption{Elementary morphisms in $\texttt{RVT}$, denoted $R$, $\overline{R}$, $C$, $\overline{C}$, and $R_v$ in order.}
    \label{fig:RVT_pieces}
\end{figure}

To see that any knot or link can be written in terms of multiplications of endpoints on copies of the elementary morphisms in $\texttt{RVT}$, it suffices to note any rotational virtual knot can be written as a set of crossings connected by virtual arcs, and that any virtual arc can be written as copies of $C$ and $\overline{C}$ connected by regular virtual arcs.

We now introduce the functor $Z:\texttt{RVT}\to\mathcal{H}$, beginning by introducing $\mathcal{H}$.

\begin{definition}\label{def:category_h}
The category $\mathcal{H}$ is defined to be the category $\texttt{RVT}$ whose morphisms have been decorated with formal elements coming from a ribbon Hopf algebra; see Appendix \ref{app:hopf} for details on these. We also allow for the linear combination of morphisms in $\mathcal{H}$, so that each set $\text{Hom}_\mathcal{H}(x,y)$ forms a vector space over $\mathbb{C}$. The decorations on morphisms, i.e.~on tangle diagrams, can be moved around along the diagram at will, also across crossings, but not past each-other.

The elements on decorations can be multiplied according to the orientation of the tangle component they are on; see Figure \ref{fig:deco_mult}. This multiplication is understood to be the formal multiplication operation within a ribbon Hopf algebra. With respect to this multiplication the elements on decorations are assumed to satisfy the axioms of a ribbon Hopf algebra. So for example decorations with the universal $R$-matrix $\mathcal{R}$ will satisfy the quantum Yang-Baxter equation; see Lemma \ref{lm:YBE}. See \cite{kauffman2015rotational} for more details and examples of this formal calculus.



Since we can move and then multiply all the decorations on a given component together, the morphisms in $\mathcal{H}$ are equivalently rotational virtual tangles each of whose components has been given a single decoration. On closed components, this description only gives well-defined labels up to cyclic permutation of the multiplications in this decoration, and so for them we consider these labels only up to such permutations.

\begin{figure}[ht]
    \centering
    \includegraphics[width=.22\linewidth]{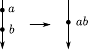}
    \caption{Multiplication of decorations on morphisms in $\mathcal{H}$.}
    \label{fig:deco_mult}
\end{figure}
\end{definition}

\begin{remark}
The formal calculus of decorations on $\mathcal{H}$ described here is somewhat different to that described in \cite{kauffman2015rotational}; particularly in how it treats curls, i.e.~cups and caps in the diagram. In \cite{kauffman2015rotational}, no algebra such as $uv^{-1}$ or $vu^{-1}$ was attached to cups or caps. Instead the rule was imposed that sliding a decoration past a cup or cap induces an application of the antipode $S$. The equivalence between these two formulations is explained in \cite[Sec.~6.6]{kauffman2015rotational} using the interplay between $S$ and $u,v$.
\end{remark}

Given our description of $\mathcal{H}$ we can say that the functor $Z$ essentially just places Hopf algebra decorations on the morphisms of $\texttt{RVT}$. The universal quantum invariant associated to a ribbon Hopf algebra $A$ will then simply consist of evaluating these expressions in $A$.

\begin{definition}\label{def:UQI}
We define the functor $Z:\texttt{RVT}\to \mathcal{H}$ by $(n)\mapsto (n)$ for objects. It is defined for morphisms to be given by Figure \ref{fig:Z_pieces} on the elementary morphisms from Figure \ref{fig:RVT_pieces}, and this assignment is extended to multiplications of these pieces in the obvious manner (namely without placing any decorations on regular virtuals arcs).

In Figure \ref{fig:Z_pieces} $\mathcal{R},\mathcal{R}^{-1},u,v$ are the defining structure morphisms of a ribbon Hopf algebra, with $\mathcal{R}$ providing a quasitriangular structure and $v$ a ribbon structure. In particular this means $\mathcal{R}$ formally represents the solution to the algebraic Yang-Baxter equation in a Hopf algebra, i.e.~it is its `universal $R$-matrix', and $\mathcal{R}^{-1}$ is its inverse. We use the notation
\[
    \mathcal{R} = \sum_i \alpha_i\otimes \beta_i
    \qquad\text{ and }\qquad
    \mathcal{R}^{-1} = \sum_i \alpha_i'\otimes \beta_i'.
\]

\begin{figure}[ht]
    \centering
    \includegraphics[width=\linewidth]{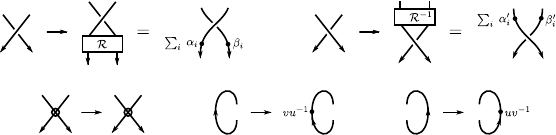}
    \caption{Definition of $Z$ on generating morphisms.}
    \label{fig:Z_pieces}
\end{figure}
\end{definition}

\begin{remark}\label{rk:evaluations}
Definition \ref{def:UQI} describes the most universal quantum invariant $Z(K)$ of a knot $K$: it keeps all the algebra formal, without specifying a ribbon Hopf algebra $A$, and also keeps the underlying knot diagram around. This is a new functor and as it is stands is not calculable, but rather serves as a point from which further evaluations can be made to describe useful quantum invariants. We have several choices of evaluation:
\begin{itemize}
    \item We can discard the topology of the underlying diagram completely, regarding it as a circle with algebra on it. The circle then imposes cyclic commutativity of the multiplication of decorations, as was noted in Definition \ref{def:category_h}, which is also known as imposing a formal trace \cite{kauffman2015rotational}. After evaluation in a Hopf algebra $A$ this option yields the standard universal quantum invariants from \cite{kauffman2015rotational}; see Definition \ref{def:quantum_invariants} below.
    \item Instead of discarding the diagram completely, we can choose to flatten its crossings to obtain a flat rotational virtual diagram. This evaluation, along with a slight generalization, constitutes the extended quantum invariants discussed below.
\end{itemize}
The verification that this functor $Z$ preserves the Reidemeister moves is done by direct generalization of known arguments \cite{Radford2,Radford1}.
\end{remark}

\begin{definition}\label{def:quantum_invariants}
\cite{kauffman2015rotational} Given a ribbon Hopf algebra $A$, we define the \textbf{universal quantum invariant} $Z_A$ associated to $A$ to be the invariant of rotational virtual knots given by the following composition:
\[
\begin{tikzcd}
\{\text{RVK}\} \arrow[r, hook] & {\text{Hom}_{\texttt{RVT}}(\emptyset,\emptyset)} \arrow[r, "Z"] & {\text{Hom}_{\mathcal{H}}(\emptyset,\emptyset)} \arrow[r] & A/I
\end{tikzcd}
\]
where the final arrow is given for a rotational virtual knot $K$ by evaluation in $A$ of the fully multiplied decoration on $Z(K)$, and $\{\text{RVK}\}$ denotes the set of rotational virtual knots. So in the words of Remark \ref{rk:evaluations}, this last map discards the underlying knot diagram in favor of a formal trace.

We must take care to note that the decoration on $Z(K)$ is only defined up to cyclic permutation of multiplication, and so for the codomain of this map we must form a corresponding quotient of $A$. Namely, we work in the quotient $A/I$ by the \textit{vector subspace} (note:~not algebra ideal) $I$ spanned by elements $\{xy-yx\in A\,\,\vert\,\,x,y\in A\}$, which imposes cyclic commutativity of the multiplication on $A/I$. Forming this quotient is otherwise known as taking a formal trace in $A$.
\end{definition}

We can alternatively move this technique through a representation $(V,\rho:A\to\text{End}(V))$ of a Hopf algebra by forming the following composition:
\[
\begin{tikzcd}
\{\text{RVK}\} \arrow[r, hook] & {\text{Hom}_{\texttt{RVT}}(\emptyset,\emptyset)} \arrow[r, "Z"] & {\text{Hom}_{\mathcal{H}}(\emptyset,\emptyset)} \arrow[r, "\rho"] & \text{End}(V) \arrow[r, "\text{Tr}"] & \mathbb{C}
\end{tikzcd}
\]
Here $\rho$ forms the representation of the decoration of $Z(K)$. The entire composition is well-defined by cyclic commutativity of traces. This therefore defines the \textbf{quantum invariant} associated to $(A,V)$, which we denote by $Q^{A,V}$. It is not difficult to show that these are equal to the quantum invariants obtained from the same information $(A,V)$ via a Reshetikhin-Turaev construction that assigns trivial permutations to virtual crossings; for example this can be shown following \cite{ohtsuki2002quantum}.

\begin{example}
A class of invariants $Q^{A,V}$ that was of interest to us in Example \ref{ex:quantum} is that arising from \textbf{quantum groups} $U_q(\mathfrak{g})$, which are ribbon Hopf algebras obtained as the $q$-deformation of the universal enveloping algebra of a semisimple Lie algebra $\mathfrak{g}$. See \cite{kassel} for more on quantum groups and Hopf algebras in general.
\end{example}

\begin{definition}\label{def:EQI}
Given a ribbon Hopf algebra $A$ we define the \textbf{extended quantum invariant} associated to $A$ to be given by the composition
\[
\begin{tikzcd}
\{\text{RVK}\} \arrow[r, hook] & {\text{Hom}_{\texttt{RVT}}(\emptyset,\emptyset)} \arrow[r, "Z"] & {\text{Hom}_{\mathcal{H}}(\emptyset,\emptyset)} \arrow[r] & A/I \times \{\text{FRV}\}
\end{tikzcd}
\]
Here $\{\text{FRV}\}$ is the set of flat rotational virtual knots. For a knot $K$, the final arrow first flattens all classical crossings in $Z(K)$ according to Remark \ref{rk:evaluations} to produce a decorated flat rotational virtual diagram. It then evaluates the decorations on $Z(K)$ as in Definition \ref{def:quantum_invariants} to produce an element of $A/I$, and also records the obtained flat virtual diagram in $\{\text{FRV}\}$.
\end{definition}

\begin{example}

In \cite{kauffman2015rotational} it was shown that there exist rotational virtual links that cannot be distinguished by any of the quantum invariants defined above. The examples of such links $L,L'$ from \cite{kauffman2015rotational} are depicted in Figure \ref{fig:problematic_knots}.

\begin{figure}[ht]
    \centering
    \includegraphics[width=.42\linewidth]{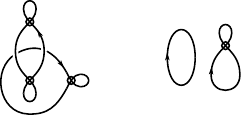}
    \caption{The problematic rotational virtual links $L$ (left) and $L'$ (right) from \cite{kauffman2015rotational}.}
    \label{fig:problematic_knots}
\end{figure}

We claim that the extended quantum invariants can distinguish the links $L$ and $L'$. Indeed, while the factor in $A/I$ of $Z'(L),Z'(L)\in (A/I) \times \{\text{FRV}\}$ cannot distinguish $L$ from $L'$ for any $A$, it is known \cite{kauffman2015rotational} that the underlying flat rotational virtuals in $\{\text{FRV}\}$ do distinguish them. Namely in \cite{kauffman2015rotational} a parity bracket invariant of flat and standard rotational virtuals was constructed using concepts discovered by Manturov \cite{Manturov}, which can distinguish these flat rotational virtuals.
\end{example}

We can make a further generalization of Definition \ref{def:UQI} by augmenting the category $\mathcal{H}$ slightly, restricting the movement of certain decorations in the image of $Z$ in a way inspired by the formalism of rotational virtual chord diagrams:

\begin{definition}
We define the augmented quantum invariant functor $Z':\texttt{RVT}\to \mathcal{H}'$ to be given by the same construction as $Z$ in Definition \ref{def:UQI}, but further placing \textit{chords} between the decorations coming from classical crossings; see Figure \ref{fig:quantum_chord}. Here $\mathcal{H}'$ is the category $\mathcal{H}$ with an augmented equivalence relation on its morphisms that we specify below. The chords have no bearing on the decorations themselves, and are purely there to keep track of which $\alpha$ belongs to which $\beta$ in the image of $Z'$. When decorations with a chord are multiplied together, we keep track of which formal factor in the multiplied expression the chord is attached to. If any decorations from classical crossings are cancelled, as happens under the $R2$ move for example, the associated chords are also discarded. 

\begin{figure}[ht]
    \centering
    \includegraphics[width=.7\linewidth]{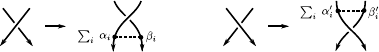}
    \caption{Placing a chord in $\mathcal{H}^{RV}$.}
    \label{fig:quantum_chord}
\end{figure}

We now define the equivalence on $\mathcal{H}'$ to be such that decorations associated to a classical crossings can only be moved across virtual arcs in pairs, much like the chords on a rotational virtual chord diagram: In $\mathcal{H}'$ we allow all the same moves on decorations as in $\mathcal{H}$, except for virtual chord slides of decorations at the end of a chord; recall the right-hand side of Figure \ref{fig:chord_sliding}. Instead, for such decorations we impose the chord detour move $CD$; recall Figure \ref{fig:CD}.
\end{definition}

The idea behind the definition of $\mathcal{H}'$ is that for invariance of $Z'$, we don't need to be able to carry individual decorations across virtual arcs; only entire crossings. This is made explicit by the following lemma:

\begin{lemma}
The augmented functor $Z'$ is an invariant of rotational virtual knots.
\end{lemma}
\begin{proof}
Since invariance of $Z$ is known, it suffices to check that the disallowing of virtual chord slides is not an obstruction to the proof of invariance of $Z$. The only (virtual) Reidemeister move where such an obstruction could occur is $mR3$, and invariance under $mR3$ is guaranteed by the $CD$ move. This is shown for a positive crossing in Figure \ref{fig:EQI_invariance}, the case for a negative crossing being analogous.
\begin{figure}[ht]
    \centering
    \includegraphics[width=.9\linewidth]{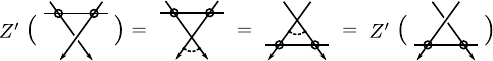}
    \caption{Proof of invariance of $Z'$ under $mR3$; decoration labels omitted for simplicity.}
    \label{fig:EQI_invariance}
\end{figure}
\end{proof}

As in Remark \ref{rk:evaluations}, while $Z'$ itself is incalculable it can be evaluated in different ways to give more tractable extended quantum invariants, for example by flattening all classical crossings. Finding different evaluations of $Z'$ (as well as $Z$) to construct new extended quantum invariants therefore makes for an interesting topic for further research.

\begin{remark}
All of the invariants discussed in this section can be generalized slightly to give (extended) quantum invariants of rotational virtual diagrams containing flat crossings as well as classical and virtual ones. Namely this is done simply by extending Figure \ref{fig:Z_pieces} to assign flat crossings to flat crossings.
\end{remark}

\section*{Acknowledgements}

We thank Dror Bar-Natan for helpful remarks.




\appendix

\section{Semisimple Lie Algebras}\label{app:lie}

We treat, without proof, the basic results on semisimple Lie algebras and their representations used in this paper. We work over a fixed field $k$, assumed to be algebraically closed and of characteristic zero for convenience.

\begin{definition}
Let $\mathfrak{g}$ be a Lie algebra. The \textbf{radical} $\mathfrak{r}$ of $\mathfrak{g}$ is defined to be the largest solvable ideal of $\mathfrak{g}$. We say $\mathfrak{g}$ is \textbf{semisimple} if $\mathfrak{r}=0$.
\end{definition}

Fundamental to the theory of semisimple Lie algebras is the Killing form:

\begin{definition}
The \textbf{Killing form} of a Lie algebra $\mathfrak{g}$ is a symmetric bilinear form $\kappa:\mathfrak{g}\otimes \mathfrak{g}\to k$ defined for $x,y\in\mathfrak{g}$ by $\kappa(x,y)=\tr\left( \text{ad}_x \circ \text{ad}_y \right)$. Here $\text{ad}_x:\mathfrak{g}\to \mathfrak{g}$ is the adjoint representation of $x$, defined by $\text{ad}_x(y)=[x,y]$.
\end{definition}

\begin{proposition}
A Lie algebra $\mathfrak{g}$ is semisimple if and only if $\kappa$ is nondegenerate.
\end{proposition}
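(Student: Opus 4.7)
The plan is to prove both implications separately, with Cartan's criterion for solvability serving as the main engine. Recall that Cartan's criterion states: a Lie subalgebra $\mathfrak{h} \subseteq \mathfrak{gl}(V)$ is solvable if and only if $\tr(xy) = 0$ for all $x \in \mathfrak{h}$ and $y \in [\mathfrak{h}, \mathfrak{h}]$. I will take this as a black box, since its proof requires the Jordan decomposition machinery and is the standard main obstacle in this circle of ideas.

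For the direction ``$\mathfrak{g}$ semisimple $\Rightarrow \kappa$ nondegenerate,'' I would let $\mathfrak{s} = \{x \in \mathfrak{g} : \kappa(x,y) = 0 \text{ for all } y \in \mathfrak{g}\}$ be the radical of the Killing form. First I would verify that $\mathfrak{s}$ is an ideal of $\mathfrak{g}$; this uses the associativity-type identity $\kappa([x,y],z) = \kappa(x,[y,z])$, which in turn follows from the identity $\tr(\ad_{[x,y]}\ad_z) = \tr(\ad_x \ad_y \ad_z) - \tr(\ad_y \ad_x \ad_z)$ and cyclic invariance of the trace. Next I would observe that the restriction of $\kappa$ to $\mathfrak{s}$ is identically zero, so in particular $\tr(\ad_x \ad_y) = 0$ for all $x,y \in \mathfrak{s}$. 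Applying Cartan's criterion to the image of $\ad: \mathfrak{s} \to \mathfrak{gl}(\mathfrak{g})$ would show that $\ad(\mathfrak{s})$ is solvable, and then a short argument using the exact sequence $0 \to Z(\mathfrak{g}) \cap \mathfrak{s} \to \mathfrak{s} \to \ad(\mathfrak{s}) \to 0$ (with abelian kernel) shows $\mathfrak{s}$ itself is solvable. Since $\mathfrak{s}$ is a solvable ideal, $\mathfrak{s} \subseteq \mathfrak{r} = 0$.

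For the direction ``$\kappa$ nondegenerate $\Rightarrow \mathfrak{g}$ semisimple,'' I would argue by contrapositive. Suppose $\mathfrak{r} \neq 0$. Then the last nonzero term of the derived series of $\mathfrak{r}$ is a nonzero abelian ideal $\mathfrak{a}$ of $\mathfrak{g}$. I claim $\mathfrak{a} \subseteq \ker \kappa$. For $a \in \mathfrak{a}$ and $x \in \mathfrak{g}$, the composition $\ad_a \circ \ad_x$ sends $\mathfrak{g}$ into $\mathfrak{a}$ (since $\mathfrak{a}$ is an ideal), and so $(\ad_a \circ \ad_x)^2$ sends $\mathfrak{g}$ into $[\mathfrak{a},\mathfrak{a}] = 0$. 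Hence $\ad_a \circ \ad_x$ is nilpotent, so its trace $\kappa(a,x)$ vanishes. Thus $0 \neq \mathfrak{a} \subseteq \ker \kappa$, contradicting nondegeneracy.

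The hard part is Cartan's criterion for solvability, which I am invoking without proof; every other ingredient (invariance of $\kappa$, the fact that solvable radicals contain nonzero abelian ideals, nilpotent operators have zero trace) is elementary. A small bookkeeping point to watch is handling the possible kernel of the adjoint representation, i.e.\ the center $Z(\mathfrak{g})$, in lifting solvability of $\ad(\mathfrak{s})$ to solvability of $\mathfrak{s}$; but since $Z(\mathfrak{g})$ is abelian, any extension of a solvable Lie algebra by an abelian one is still solvable, and this step is routine.
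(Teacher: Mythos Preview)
Your argument is correct and is the standard proof of Cartan's criterion for semisimplicity. The paper itself does not prove this proposition: it appears in Appendix~A, which explicitly states that the basic results on semisimple Lie algebras are treated \emph{without proof}, as background material. So there is no proof in the paper to compare against; you have supplied one where the authors chose not to.

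One small remark on a point you flagged yourself: when you say ``the last nonzero term of the derived series of $\mathfrak{r}$ is a nonzero abelian ideal $\mathfrak{a}$ of $\mathfrak{g}$,'' it is worth noting explicitly that each term $\mathfrak{r}^{(i)}$ of the derived series of an ideal $\mathfrak{r} \trianglelefteq \mathfrak{g}$ is again an ideal of $\mathfrak{g}$ (an easy induction using the Jacobi identity), since a priori the derived series only gives ideals of $\mathfrak{r}$. Otherwise the argument is clean and complete modulo the black-boxed Cartan criterion for solvability, exactly as you say.
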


It follows immediately that every finite-dimensional semisimple Lie algebra has a basis $\{I_\mu\}_{\mu\in\{0,\dots,m\}}$ that is orthonormal with respect to the Killing form, meaning $\kappa(I_\mu,I_\nu)= \delta_{\mu\nu}$. This basis induces a dual basis of $\mathfrak{g}^*$, denoted $\{I^\mu\}$.

\begin{definition}
Let $\mathfrak{g}$ be semisimple with $\dim(\mathfrak{g})=m$, and let $\{I_\mu\}$ be a basis for $\mathfrak{g}$ orthonormal with respect to $\kappa$. The \textbf{invariant 2-tensor} of $\mathfrak{g}$ is defined to be the element $\tau\in \mathfrak{g}\otimes \mathfrak{g}$ given by $\tau=\sum_{i=1}^m I_\mu\otimes I_\mu$. The \textbf{Casimir operator} $C$ is defined to be the image of $\tau$ under the canonical map $\mathfrak{g}\otimes \mathfrak{g}\to U(\mathfrak{g})$.
\end{definition}

\begin{lemma}
Both $\tau$ and $C$ are independent of the choice of orthonormal basis $\{I_\mu\}$.
\end{lemma}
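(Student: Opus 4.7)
The plan is to reduce the basis-independence of $\tau$ to a standard change-of-basis argument, and then deduce the statement for $C$ as an immediate corollary by functoriality of the canonical map $\mathfrak{g}\otimes\mathfrak{g}\to U(\mathfrak{g})$.

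For $\tau$, I would first fix two bases $\{I_\mu\}$ and $\{J_\mu\}$ of $\mathfrak{g}$, both orthonormal with respect to the Killing form $\kappa$, and write the change of basis as $J_\mu=\sum_\nu A_{\mu\nu}I_\nu$ for some matrix $A=(A_{\mu\nu})$. The orthonormality conditions $\kappa(J_\mu,J_\rho)=\delta_{\mu\rho}$ together with $\kappa(I_\nu,I_\sigma)=\delta_{\nu\sigma}$ force
\[
    \delta_{\mu\rho} \;=\; \sum_{\nu,\sigma} A_{\mu\nu}A_{\rho\sigma}\,\kappa(I_\nu,I_\sigma) \;=\; \sum_\nu A_{\mu\nu}A_{\rho\nu},
\]
i.e.\ $AA^T=\mathrm{Id}$. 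Substituting this into the defining expression for $\tau$ in the $J$-basis then gives
\[
    \sum_\mu J_\mu\otimes J_\mu \;=\; \sum_{\mu,\nu,\sigma} A_{\mu\nu}A_{\mu\sigma}\, I_\nu\otimes I_\sigma \;=\; \sum_{\nu,\sigma}\delta_{\nu\sigma}\, I_\nu\otimes I_\sigma \;=\; \sum_\nu I_\nu\otimes I_\nu,
\]
which is exactly the expression for $\tau$ in the $I$-basis. Hence $\tau$ is independent of the chosen orthonormal basis.

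For $C$, since $C$ is by definition the image of $\tau$ under the canonical linear map $\mathfrak{g}\otimes\mathfrak{g}\to U(\mathfrak{g})$, and this map is intrinsic (it does not depend on a choice of basis of $\mathfrak{g}$), the independence of $\tau$ immediately yields the independence of $C$.

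There is no real obstacle here: the argument is a direct linear-algebra verification using only the definition of an orthonormal basis for a symmetric bilinear form. The one thing to be careful about is that the Killing form on a semisimple Lie algebra is nondegenerate (as noted in the preceding proposition), which is what guarantees that orthonormal bases exist at all over the algebraically closed field $k$ of characteristic zero; this is what legitimizes writing both $\{I_\mu\}$ and $\{J_\mu\}$ as $\kappa$-orthonormal in the first place.
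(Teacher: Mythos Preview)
Your argument is correct and entirely standard: the change-of-basis matrix between two $\kappa$-orthonormal bases is orthogonal ($AA^T=\mathrm{Id}$, hence $A^TA=\mathrm{Id}$ since $A$ is square), and this immediately gives $\sum_\mu J_\mu\otimes J_\mu=\sum_\nu I_\nu\otimes I_\nu$; the claim for $C$ follows by pushing through the canonical map. Note, however, that the paper does not supply a proof of this lemma at all---the appendix explicitly states that the results there are given without proof---so there is nothing to compare your approach against.
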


Note that via the tensor-hom adjunction we have a series of isomorphisms
\[
    \text{Hom}(\mathfrak{g}\otimes \mathfrak{g},k)\cong \text{Hom}(k,(\mathfrak{g}\otimes \mathfrak{g})^*)\cong \text{Hom}(k,\mathfrak{g}^*\otimes \mathfrak{g}^*).
\]
The image of $\kappa \in \text{Hom}(\mathfrak{g}\otimes \mathfrak{g},k)$ under these isomorphisms is equivalent to an element $\kappa^*\in \mathfrak{g}^*\otimes \mathfrak{g}^*$.

\begin{lemma}
The invariant 2-tensor $\tau$ is the dual of $\kappa^*$ (after making the identification $(\mathfrak{g}^*\otimes \mathfrak{g}^*)^* \cong \mathfrak{g}\otimes \mathfrak{g}$).
\end{lemma}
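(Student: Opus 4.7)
The plan is to make all of the identifications $\mathrm{Hom}(\mathfrak{g}\otimes\mathfrak{g},k) \cong \mathrm{Hom}(k,\mathfrak{g}^*\otimes\mathfrak{g}^*)$ and $(\mathfrak{g}^*\otimes\mathfrak{g}^*)^*\cong \mathfrak{g}\otimes\mathfrak{g}$ explicit in a chosen orthonormal basis, so that the equality is reduced to a short bookkeeping computation. The key point is that orthonormality with respect to $\kappa$ is exactly what makes the dual basis of $\mathfrak{g}^*$ match up with the expression for $\tau$.

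Concretely, I would first fix an orthonormal basis $\{I_\mu\}_{\mu=1}^m$ of $\mathfrak{g}$, so $\kappa(I_\mu,I_\nu)=\delta_{\mu\nu}$, and let $\{I^\mu\}$ denote the associated dual basis of $\mathfrak{g}^*$, characterized by $I^\mu(I_\nu)=\delta^\mu_\nu$. Under the tensor-hom adjunction any bilinear form $B:\mathfrak{g}\otimes\mathfrak{g}\to k$ corresponds to the element $\sum_{\mu,\nu}B(I_\mu,I_\nu)\,I^\mu\otimes I^\nu\in\mathfrak{g}^*\otimes\mathfrak{g}^*$. Applying this to $B=\kappa$ gives
\[
    \kappa^* \;=\; \sum_{\mu,\nu}\kappa(I_\mu,I_\nu)\,I^\mu\otimes I^\nu \;=\; \sum_{\mu}I^\mu\otimes I^\mu,
\]
where the second equality uses orthonormality.

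Next I would spell out the canonical identification $(\mathfrak{g}^*\otimes\mathfrak{g}^*)^*\cong \mathfrak{g}\otimes\mathfrak{g}$ coming from the double-dual isomorphism (valid in finite dimensions): under it, the dual basis of $\{I^\mu\otimes I^\nu\}$ is exactly $\{I_\mu\otimes I_\nu\}$. Therefore the element dual to $\kappa^*=\sum_\mu I^\mu\otimes I^\mu$ is $\sum_\mu I_\mu\otimes I_\mu$, which is by definition $\tau$. This establishes the claim.

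The only subtle point, and the one I would make sure to address, is independence from the choice of orthonormal basis: since $\tau$ is already known to be basis-independent by the preceding lemma, and $\kappa^*$ is intrinsically defined from $\kappa$, the identity holds canonically and not merely for the chosen basis. There is no real obstacle here beyond careful tracking of the tensor-hom adjunction and the double dual; once the orthonormal basis is in hand, the computation is essentially immediate.
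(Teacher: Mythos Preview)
The paper does not actually supply a proof of this lemma: the appendix opens with ``We treat, without proof, the basic results on semisimple Lie algebras\dots'', and this lemma is one of those results stated without argument. Your proof is correct and is exactly the standard verification one would give --- computing $\kappa^*=\sum_\mu I^\mu\otimes I^\mu$ in an orthonormal basis and then observing that the $\kappa$-induced identification $\mathfrak{g}\cong\mathfrak{g}^*$ carries this to $\tau=\sum_\mu I_\mu\otimes I_\mu$. The one point worth tightening is your phrase ``the element dual to $\kappa^*$'': passing from a dual \emph{basis} to a dual \emph{element} is not a canonical operation on its own, so you should say explicitly that you are applying $\kappa^\sharp\otimes\kappa^\sharp$ (the inverse of the Killing-form isomorphism on each tensor factor), which is well-defined independently of the basis; with that clarification the argument is complete.
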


\begin{definition}
A \textbf{representation} of a Lie algebra $\mathfrak{g}$ is a pair $(V,\rho)$ of a vector space $V$ and a Lie algebra homomorphism $\rho:\mathfrak{g}\to \text{End}(V)$.
\end{definition}

\begin{definition}
Let $\mathfrak{g}$ be a Lie algebra. The \textbf{tensor algebra} of $\mathfrak{g}$ is defined to be the space
\[
    T(\mathfrak{g}) = \bigoplus_{n=0}^\infty \mathfrak{g}^{\otimes n} = k\oplus \mathfrak{g}\oplus (\mathfrak{g}\otimes \mathfrak{g})\oplus \dots
\]
This is an algebra via the multiplication $x\cdot y=x\otimes y$. Let $I$ be the algebra ideal of $\mathfrak{g}$ generated by the set $\{x\otimes y-y\otimes x-[x,y]\,\vert\,x,y,\in\mathfrak{g}\}$. The \textbf{universal enveloping algebra} $U(\mathfrak{g})$ of $\mathfrak{g}$ is defined to be the quotient algebra $T(\mathfrak{g})/I$.
\end{definition}

Note there is a canonical map $\iota:\mathfrak{g} \to T(\mathfrak{g})\to U(\mathfrak{g})$, namely the canonical injection of $\mathfrak{g}$ into $T(\mathfrak{g})$ followed by the projection of $T(\mathfrak{g})$ onto $T(\mathfrak{g})/I$.

\begin{lemma}
Lie algebra representations of $\mathfrak{g}$ are in one-to-one correspondence with algebra representations of $U(\mathfrak{g})$. Namely if $\rho$ is a $U(\mathfrak{g})$-representation, then $\rho\circ\iota$ is a $\mathfrak{g}$-representation, and this correspondence is bijective.
\end{lemma}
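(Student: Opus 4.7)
The plan is to prove the lemma via the universal property of the tensor algebra, followed by passage to the quotient by the ideal $I$. The assertion breaks into three tasks: constructing a map from Lie algebra representations to $U(\mathfrak{g})$-representations, checking that the given map in the other direction $\rho\mapsto \rho\circ\iota$ really does land in Lie algebra representations, and verifying that the two constructions are mutually inverse.

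First I would handle the direction stated in the lemma. Given an associative algebra representation $\rho\colon U(\mathfrak{g})\to\End(V)$, the composition $\rho\circ\iota$ is linear, so I only need to check it respects the bracket. Because $\End(V)$ is associative, its induced Lie bracket is $[A,B]=AB-BA$; then for $x,y\in\mathfrak{g}$ one has $\rho(\iota(x))\rho(\iota(y))-\rho(\iota(y))\rho(\iota(x))=\rho(\iota(x)\iota(y)-\iota(y)\iota(x))$, and by the defining relation of $U(\mathfrak{g})$ we have $\iota(x)\iota(y)-\iota(y)\iota(x)=\iota([x,y])$ in $U(\mathfrak{g})$. So $\rho\circ\iota$ is indeed a Lie algebra morphism, giving a well-defined assignment from $U(\mathfrak{g})$-reps to $\mathfrak{g}$-reps.

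For the other direction, given a Lie algebra representation $\sigma\colon\mathfrak{g}\to\End(V)$, I would first invoke the universal property of the tensor algebra: the linear map $\sigma$ extends uniquely to an associative algebra morphism $\tilde\sigma\colon T(\mathfrak{g})\to\End(V)$ sending $x_1\otimes\cdots\otimes x_n$ to $\sigma(x_1)\cdots\sigma(x_n)$. Next I would observe that the generators $x\otimes y - y\otimes x - [x,y]$ of the two-sided ideal $I$ all lie in the kernel of $\tilde\sigma$, precisely because $\sigma$ is a Lie algebra map: $\tilde\sigma(x\otimes y-y\otimes x-[x,y])=\sigma(x)\sigma(y)-\sigma(y)\sigma(x)-\sigma([x,y])=0$. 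Since $I$ is the two-sided ideal generated by these elements and the kernel is itself a two-sided ideal, $I\subseteq \ker(\tilde\sigma)$, so $\tilde\sigma$ descends to an algebra morphism $\bar\sigma\colon U(\mathfrak{g})\to\End(V)$.

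Finally I would verify that the two procedures are mutually inverse. In one composition, starting from $\sigma$, building $\bar\sigma$, and composing with $\iota$ gives $\bar\sigma\circ\iota$, which by construction sends $x\in\mathfrak{g}$ to $\sigma(x)$, recovering $\sigma$. In the other composition, starting from $\rho$, one obtains $\overline{\rho\circ\iota}$, which by construction agrees with $\rho$ on the image $\iota(\mathfrak{g})\subseteq U(\mathfrak{g})$; since this image generates $U(\mathfrak{g})$ as an associative algebra and both maps are algebra morphisms, they agree on all of $U(\mathfrak{g})$. There is no serious obstacle here; the only subtlety worth being careful about is the containment $I\subseteq \ker(\tilde\sigma)$, which uses that $\ker(\tilde\sigma)$ is a two-sided ideal (so it contains not only the Lie-bracket generators but all their $T(\mathfrak{g})$-multiples).
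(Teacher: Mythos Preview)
Your argument is correct and is the standard proof of this fact. Note, however, that the paper does not actually prove this lemma: it appears in Appendix~\ref{app:lie}, which explicitly treats these results ``without proof,'' so there is nothing in the paper to compare your approach against. Your write-up fills in exactly what a reader would supply on their own.
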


\section{Ribbon Hopf Algebras}\label{app:hopf}

Here we briefly recall the definition of a ribbon Hopf algebra, outlining how the structure defining such an algebra plays into the theory of knots. As before we work over a fixed field $k$.

\begin{definition}
A \textbf{bialgebra} $B$ is a vector space over $k$ equipped with compatible algebra and coalgebra structures. Here an algebra is given by a multiplication $m:B\otimes B\to B$ and a unit $\eta: k\to B$, a coalgebra is given by the dual structure of a comultiplication $\Delta:B\to B\otimes B$ and a counit $\epsilon:B\to k$, and these structures should be compatible in the sense that $(\Delta,\epsilon)$ are algebra morphisms. A \textbf{Hopf algebra} $H$ is a bialgebra equipped with an `antipode' map $S:H\to H$ such that the following diagram commutes:
\[
\begin{tikzcd}
                                                                               & H\otimes H \arrow[r, "S\otimes \text{id}"]  & H\otimes H \arrow[rd, "m"]  &   \\
H \arrow[ru, "\Delta"] \arrow[rd, "\Delta"'] \arrow[rrr, "\eta\circ \epsilon"] &                                             &                             & H \\
                                                                               & H\otimes H \arrow[r, "\text{id}\otimes S"'] & H\otimes H \arrow[ru, "m"'] &  
\end{tikzcd}
\]
\end{definition}

Hopf algebras have a particularly nice representation theory mirroring that of groups, in some sense. Under this analogy the antipode $S$ plays the role of the group's inverse: the antipode is required to ensure that for $V$ a Hopf algebra representation, its dual $V^*$ also has a canonical Hopf algebra representation structure. Under additional assumptions this representation theory allows for the extraction of framed knot invariants. These assumptions are those of `quasi-triangularity' and `ribbon-ness':

\begin{definition}\label{def:quasitriangular}
A \textbf{quasitriangular structure} on a Hopf algebra $H$ is an element $\mathcal{R}\in H\otimes H$.
If we write $\mathcal{R}=\sum_i \alpha_i\otimes \beta_i$ and denote $\mathcal{R}_{12}=\mathcal{R}\otimes 1$, $\mathcal{R}_{23}=1\otimes \mathcal{R}$, and $\mathcal{R}_{13}=\sum_i \alpha_i\otimes 1\otimes \beta_i$, then the element $\mathcal{R}$ must satisfy:
\begin{align*}
    & P\circ \Delta(x) = \mathcal{R}\Delta(x)\mathcal{R}^{-1} \qquad \forall\,\,x\in A,\\
    & (\Delta\otimes \text{id})(\mathcal{R}) = \mathcal{R}_{13}\mathcal{R}_{23},\\
    & (\text{id}\otimes \Delta)(\mathcal{R}) = \mathcal{R}_{13}\mathcal{R}_{12},
\end{align*}
where $P: A\otimes A\to A\otimes A$ is the linear map given by $x\otimes y\mapsto y\otimes x$ on pure tensors.
\end{definition}

A Hopf algebra $H$ is called quasitriangular if it has been equipped with a quasitriangular structure $\mathcal{R}$. For a quasitriangular Hopf algebra $(H,\mathcal{R})$ we will let $u$ denote the element of $H$ given by
\[
    u = \sum_i S(\beta_i)\alpha_i.
\]

\begin{definition}
A \textbf{ribbon Hopf algebra} $A$ is a quasitriangular Hopf algebra with a distinguished central element $v\in A$ such that
\begin{align*}
    & v^2 = S(u)\cdot u,\\
    & \Delta(v) = (v\otimes v)\cdot (\mathcal{R}_{21}\mathcal{R})^{-1},\\
    & S(v)=v,\\
    & \epsilon(v)=1.
\end{align*}
\end{definition}

The definition of a quasitriangular structure is precisely such that if we assign copies of $\mathcal{R}$ or $\mathcal{R}^{-1}$ to the crossings of a knot diagram in a certain way (see Section \ref{sec:quantum} or \cite{ohtsuki2002quantum} for details), then this assignment is invariant under $R3$. This invariance follows from the following lemma, which is immediate from the definition:

\begin{lemma}\label{lm:YBE}
If $\mathcal{R}$ is a quasitriangular structure on a Hopf algebra $H$, then
\[
    \mathcal{R}_{12} \mathcal{R}_{13} \mathcal{R}_{23} = \mathcal{R}_{23} \mathcal{R}_{13} \mathcal{R}_{12}.
\]
\end{lemma}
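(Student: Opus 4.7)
The plan is to derive the identity by computing the single product $\mathcal{R}_{12}\cdot(\Delta\otimes\text{id})(\mathcal{R})$ in $H^{\otimes 3}$ in two different ways and comparing the results, using all three defining axioms of a quasitriangular structure.

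First, axiom~(2) directly gives $(\Delta\otimes\text{id})(\mathcal{R}) = \mathcal{R}_{13}\mathcal{R}_{23}$, so the product under consideration equals $\mathcal{R}_{12}\mathcal{R}_{13}\mathcal{R}_{23}$, producing the left-hand side of the Yang-Baxter equation. Second, writing $\mathcal{R}=\sum_i \alpha_i\otimes \beta_i$, axiom~(1) applied to each element $x=\alpha_i$ gives $\mathcal{R}\,\Delta(\alpha_i) = \Delta^{op}(\alpha_i)\,\mathcal{R}$, where $\Delta^{op} = P\circ\Delta$. Tensoring these identities in the third coordinate with $\beta_i$ and summing over $i$ yields
\[
\mathcal{R}_{12}\cdot(\Delta\otimes\text{id})(\mathcal{R}) \;=\; (\Delta^{op}\otimes\text{id})(\mathcal{R})\cdot\mathcal{R}_{12}.
\]
Third, applying the permutation of the first two tensor factors to axiom~(2) converts $\mathcal{R}_{13}\mathcal{R}_{23}$ into $\mathcal{R}_{23}\mathcal{R}_{13}$, so $(\Delta^{op}\otimes\text{id})(\mathcal{R}) = \mathcal{R}_{23}\mathcal{R}_{13}$. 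Combining the two evaluations gives the Yang-Baxter relation $\mathcal{R}_{12}\mathcal{R}_{13}\mathcal{R}_{23} = \mathcal{R}_{23}\mathcal{R}_{13}\mathcal{R}_{12}$.

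The main obstacle is purely notational bookkeeping. One must verify in Sweedler notation that multiplying the identity $\mathcal{R}\Delta(\alpha_i) = \Delta^{op}(\alpha_i)\mathcal{R}$ on the third slot by $\beta_i$ and summing over $i$ genuinely factors as $(\Delta^{op}\otimes\text{id})(\mathcal{R})\cdot\mathcal{R}_{12}$, and that applying the coordinate swap of slots $1$ and $2$ to $\mathcal{R}_{13}\mathcal{R}_{23}$ gives $\mathcal{R}_{23}\mathcal{R}_{13}$ (the two elementary tensors have disjoint supports in the first two indices, so their order is reversed by the permutation). Both checks are short and formal, and no ingredient beyond the three quasitriangularity axioms is needed; axiom~(3) is not required for this particular identity, although it would yield a symmetric alternative proof starting from $(\text{id}\otimes\Delta)(\mathcal{R})\cdot\mathcal{R}_{23}$.
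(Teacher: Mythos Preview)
Your argument is correct and is precisely the standard derivation the paper has in mind when it says the lemma ``is immediate from the definition''; the paper itself offers no proof beyond that remark. Your minor aside about ``disjoint supports'' is a slightly informal way to justify $P_{12}(\mathcal{R}_{13}\mathcal{R}_{23})=\mathcal{R}_{23}\mathcal{R}_{13}$---more cleanly, $P_{12}$ is an algebra automorphism of $H^{\otimes 3}$ which swaps $\mathcal{R}_{13}\leftrightarrow\mathcal{R}_{23}$---but the conclusion is right and the overall proof is complete.
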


As for the ribbon structure, the ribbon element $v$ is required to handle rotational structure in a knot diagram: in the construction of framed knot invariants from a ribbon Hopf algebra in Section \ref{sec:quantum} one can check that $v\in A$ is the element associated to a positive curl in a knot diagram. The ribbon element $v$ is also required to show $R1'$-invariance of these invariants.

\end{document}